\newtheorem{corollary}{\bfseries Corollary}
\newtheorem{lemma}{\bfseries Lemma}
\newtheorem{remark}{\bfseries Remark}
\newtheorem{theorem}{\bfseries Theorem}
\newtheorem{example}{\bfseries Example}
\newenvironment{proof}{\paragraph{Proof:}}{\hfill$\square$}
	\tikzstyle{frame} = [draw, -latex]
	\tikzstyle{line} = [draw]
	\tikzstyle{line2} = [draw, dashdotted]
	\tikzstyle{line3} = [draw, dashed]
	\tikzstyle{line3UD} = [draw, dashed]
	\tikzstyle{place} = [circle, draw=black, fill=white, thick, inner sep=2pt, minimum size=1mm]
	\tikzstyle{place2} = [circle, draw=black, fill=black, thick, inner sep=2pt, minimum size=1mm]
	\tikzstyle{placeRed} = [circle, draw=red, fill=red, thick, inner sep=2pt, minimum size=1mm]
	\tikzstyle{vertex} = [circle, draw=black, fill=black, thick, inner sep=2pt, minimum size=1mm]
\def\BState{\State\hskip-\ALG@thistlm}
\tikzstyle{decision} = [diamond, draw, fill=blue!20,
\tikzstyle{block1} = [rectangle, draw, text width=8em, text centered, minimum height=4em]
\tikzstyle{block2} = [rectangle, draw, text width=3em, text centered, minimum height=4em]
\tikzstyle{block3} = [rectangle, draw, text width=11em, text centered, minimum height=12em, dashed,black]
\tikzstyle{block4} = [rectangle, draw, text width=11em, text centered, minimum height=18em, dashed,black]
\tikzstyle{block5} = [rectangle, draw, text width=11em, text centered, minimum height=32em, dashed,black]
\tikzstyle{block6} = [rectangle, draw, text width=11em, text centered, minimum height=18.5em, dashed,black]
\tikzstyle{block7} = [rectangle, draw, text width=11em, text centered, minimum height=11.8em, dashed,black]
\tikzstyle{line01} = [draw, -latex']
\tikzstyle{line02} = [draw, latex'-latex']
\newcommand*\patchAmsMathEnvironmentForLineno[1]{%
\expandafter\let\csname old#1\expandafter\endcsname\csname #1\endcsname
\expandafter\let\csname oldend#1\expandafter\endcsname\csname end#1\endcsname
\renewenvironment{#1}%
{\linenomath\csname old#1\endcsname}%
{\csname oldend#1\endcsname\endlinenomath}}%
\newcommand*\patchBothAmsMathEnvironmentsForLineno[1]{%
\patchAmsMathEnvironmentForLineno{#1}%
\patchAmsMathEnvironmentForLineno{#1*}}%
\journal{Journal of the Franklin Institute}
\begin{document}
\let\WriteBookmarks\relax
\def\floatpagepagefraction{1}
\def\textpagefraction{.001}

\begin{frontmatter}



\title{Fixed Node Determination and Analysis in Directed Acyclic Graphs of Structured Networks}


\author[1]{Nam-jin Park}
\ead{namjinpark@gist.ac.kr}

\author[1]{Yeong-Ung Kim}
\ead{yeongungkim@gm.gist.ac.kr}

\author[1]{Hyo-Sung Ahn\corref{cor1}}
\ead{hyosung@gist.ac.kr}

\affiliation[1]{organization={Gwangju Institute of Science and Technology (GIST)},
            city={Gwangju},
            country={Republic of Korea}}

\cortext[cor1]{Corresponding author}

\begin{abstract}
This paper explores the conditions for determining fixed nodes in structured networks, specifically focusing on directed acyclic graphs (DAGs). 
We introduce several necessary and sufficient conditions for determining fixed nodes in $p$-layered DAGs.
This is accomplished by defining the problem of maximum disjoint stems, based on the observation that all DAGs can be represented as hierarchical structures with a unique label for each layer.
For structured networks, we discuss the importance of fixed nodes by considering their controllability against the variations of network parameters.
Moreover, we present an efficient algorithm that simultaneously performs labeling and fixed node search for $p$-layered DAGs with an analysis of its time complexity. 
The results presented in this paper have implications for the analysis of controllability at the individual node level in structured networks.
\end{abstract}



\begin{keyword}
Network controllability \sep fixed controllable subspace\sep fixed nodes\sep directed acyclic graphs

\end{keyword}

\end{frontmatter}

\section{Introduction}
\subsection{Controllability of Structured Networks}
The problem of network controllability is a research topic that has gained considerable attention in recent years, particularly in the context of structured networks.
In a structured network, edge weights are divided into zero or non-zero, and the study of their controllability is known as \textit{structural controllability}, which was first introduced by Lin \cite{lin1974structural}.
A structured network is called structurally controllable if the network is controllable for almost all network parameters, i.e., choices of non-zero edge weights \cite{lu2020sampled}.
On the other hand, the network controllability that considers all choices of edge weights is being studied under the name of \textit{strong structural controllability} \cite{jarczyk2011strong,mousavi2017structural,srighakollapu2021strong,zhu2022strong}
with various approaches such as PMI sequences \cite{yaziciouglu2016graph}, zero forcing sets \cite{mousavi2017structural}, 
and maximum matching \cite{shabbir2019computation}. 
However, due to the \textit{generic property} \cite{dion2003generic} of structural controllability, this field is more actively researched than strong structural controllability. 
Although a structurally controllable network may be uncontrollable for specific network parameters, its generic property makes it controllable for the majority of network parameters. 
As a result, structural controllability is more commonly studied in real-world networks compared to strong structural controllability.

On the other hand, when a structured network is not structurally controllable, a controllable subnetwork \cite{iudice2019node} can be analyzed through the \textit{controllable subspace}.
The dimension of controllable subspace, which is equivalent to the rank of the controllability matrix, is determined by the network parameters.
The upper bound of the dimension of controllable subspace is known as the \textit{generic dimension of controllable subspace} and has been studied in the framework of structural controllability \cite{zhang2013upper,hosoe1980determination,jarczyk2010determination}.
While the dimension of controllable subspace provides some information, it does not offer a complete insight into the controllability of individual nodes within the structured networks.

\subsection{Controllability of Individual Nodes}

Determining the controllability of individual nodes depends on the variations of network parameters is an important problem in structured networks.
To fully understand the controllability of individual nodes in structured networks, we need to consider two types of control uncertainties induced by network parameters:
The first type of control uncertainty arises from the algebraic constraints of the network parameters.
The second type of control uncertainty originates from variations of network parameters that lead to the non-uniqueness of controllable subspaces.
Addressing the first type of control uncertainty can be achieved using the notion of strong structural controllability, whereas the second type requires a different approach.
\textcolor{black}{To counter this, the idea of a \textit{fixed controllable subspace} has been introduced in \cite{commault2017fixed}. 
This concept represents the fixed part of controllable subspaces across different parameter variations.
Moreover, \cite{commault2017fixed} offers conditions for individual nodes included in the fixed controllable subspace. 
Specifically, a node is referred to as a \textit{fixed node} if its corresponding standard basis is included in the fixed controllable subspace, which will be defined in detail in later sections.}

This analysis is essential as it identifies individual nodes that remain controllable regardless of the variation in network parameters, providing a robust perspective on structured networks.
\textcolor{black}{
Expanding on this foundational work, the authors in \cite{van2019dynamic} propose conditions for non-fixed nodes using the concept of the \textit{supremal minimal separator}. 
However, the conditions for fixed nodes proposed in \cite{van2019dynamic} provide only the necessary conditions. 
Therefore, the complete characterization of fixed nodes through necessary and sufficient graph-theoretical conditions continues to be an open problem.} 
Exploring the conditions for the fixed nodes in various types of graphs is important.
In particular, DAGs are widely used in various fields, such as network controllability \cite{liu2012control,czeizler2018structural}, 
routing \cite{hua2008optimal}, scheduling \cite{chen2019timing,arif2020hybrid}, and social networks \cite{chen2014exploiting}.
DAGs allow clear and non-circular data flow, which is especially suitable for representing cause-and-effect relationships or temporal sequences.
Moreover, due to the non-cyclical nature of DAGs, they offer significant characteristics over conventional directed cyclic graphs, 
particularly when it comes to the computational efficiency of determining graph-theoretical conditions. This efficiency greatly aids in the analysis of network controllability.

\subsection{Research Flow}
This paper introduces the notion of fixed nodes in structured networks from a graph perspective.
We first discuss the control uncertainties that can exist in structured networks and their implications on network controllability.
Based on the notion of fixed controllable subspace introduced in \cite{commault2017fixed}, 
we interpret the properties of fixed nodes in DAGs by defining the problem of maximum disjoint stems.
Based on the fact that DAGs can always be represented as hierarchical structures with unique layers, 
we present several intuitive conditions under which individual nodes in each layer of a DAG can be fixed nodes.
Then, we show that finding the maximum disjoint stems with $m$-leaders in DAG is equivalent to the well-known $m$-vertex disjoint path problem ($k$-VDPP) \cite{robertson1995graph}.  
Finally, we introduce an efficient algorithm that simultaneously performs labeling and fixed node search for DAGs, and an analysis of its time complexity is also provided.

\subsection{Contributions}
This paper provides a graph-theoretical analysis of the controllability of individual nodes within structured networks, considering the control uncertainties they inherently possess. This perspective enhances our understanding and insights into the controllability characteristics of structured networks. The main contributions of this paper are as follows.
\begin{itemize}

\item 
\textcolor{black}{
In contrast to \cite{van2019dynamic}, which presents only necessary conditions for fixed nodes,
this paper introduces necessary and sufficient conditions for fixed nodes in DAGs with the problem of maximum disjoint stems (\textit{Theorem~\ref{thm1}} and \textit{Theorem~\ref{thm3}}). 
This provides insights into the graph-theoretical properties of fixed nodes in DAGs.}

\item 
\textcolor{black}{
Based on the necessary and sufficient conditions for fixed nodes, 
the paper presents an efficient algorithm that simultaneously performs labeling and fixed node search for $p$-layered DAGs (\textit{Algorithm~\ref{alg}}), along with an analysis of its time complexity. 
This offers a practical approach to understanding network controllability in DAGs.}

\end{itemize}
Overall, the results presented in this paper contribute to the analysis of controllability at the individual node level in structured networks, particularly in DAGs.
This paper is organized as follows.
In \textit{Section~\ref{sec_pre}}, we provide preliminaries and the existing results on network controllability from the perspective of control uncertainty.
In \textit{Section~\ref{sec_fix}}, we introduce the hierarchical structure of DAGs with a labeling algorithm and provide several conditions for fixed nodes in layered DAGs.
In \textit{Section~\ref{sec_alg}}, we present an efficient algorithm to determine fixed nodes in DAGs and provide its time complexity analysis.
The conclusions and topological examples are given in \textit{Section~\ref{sec_ex}} and \textit{Section~\ref{sec_con}}, respectively.

\section{Preliminaries}\label{sec_pre}
\subsection{State space representation}
Let us consider a network represented by:
\begin{equation} \label{eq:2}
\dot{x}=Ax+Bu,
\end{equation}
\textcolor{black}{where $x=[x_1,...,x_n]\in\mathbb{R}^{n}$, $u=[u_1,...,u_m]\in\mathbb{R}^{m}$}, and $A\in\mathbb{R}^{n\times n}$ is the adjacency matrix including the constant connection weights $[A]_{i,j}$ from states $x_i$ to $x_j$ for $i,j\in\{1,...,n\}$.
The input matrix $B \in \mathbb{R}^{n \times m}$ represents the input-connection from $m$-inputs to $n$-states.
Note that input can only be connected to one state, that is, one-to-one correspondence.
\textcolor{black}{
For a given matrix $A \in \mathbb{R}^{n \times n}$, we define a family set $Q(A)$ comprising all matrices that share the same non-zero/zero patterns with $A$ as:}
\begin{equation} \label{fam}
\textcolor{black}{Q(A)=\{A_\Lambda:[A_\Lambda]_{i,j}=a_{ij}(\neq 0) \Longleftrightarrow [A]_{i,j}\neq 0\},}
\end{equation}
\textcolor{black}{where $\Lambda$ represents specific connection weights within the structured network.}
Then, the structured network of \eqref{eq:2} can be represented as:
\begin{equation} \label{eq:2_1}
\dot{x}=A_{\Lambda}x+B_{\Lambda}u,
\end{equation}
\textcolor{black}{where $A_{\Lambda}\in Q(A)$ and $B_{\Lambda} \in Q(B)$.}
Note that changing the non-zero weights of the input matrix $B_{\Lambda}$ does not affect the controllability of a network. 
This paper assumes that the non-zero weights in $B_{\Lambda}$ are fixed to 1. 
A structured network given by \eqref{eq:2_1} is considered structurally controllable if $\mathcal{C}_{\Lambda}$ has full row rank for almost all network parameters,
where $\mathcal{C}_{\Lambda}$ is given as:
\begin{equation} \label{eq:3}
\mathcal{C}_{\Lambda}=[B_{\Lambda}, A_{\Lambda}B_{\Lambda}, A^2_{\Lambda}B_{\Lambda}, ... , A^{n-1}_{\Lambda}B_{\Lambda}].
\end{equation}

\textcolor{black}{The \textit{controllable subspace} of \eqref{eq:2_1} is defined by the column space of the controllability matrix in  \eqref{eq:3}.
Further, the maximum rank of the controllability matrix, given by \eqref{eq:3}, is designated as the \textit{generic dimension of controllable subspace} of a structured network.
Note that this controllable subspace with the maximum rank may also vary depending on network parameters, specifically, the non-zero weights in $A_{\Lambda}$. 
If the structured network given by \eqref{eq:2_1} is uncontrollable, a primary concern is to determine if each state can independently reach a desired point.}
Note that there exist certain states, known as dependent states, that can become controllable by sacrificing the controllability of other states that they are dependent on in the network.


\begin{figure*}[t]
    \centering
    \subfigure[$a_{12}\gg a_{32}$]{
        {\includegraphics[width=1.71in,height=3.7in,clip,keepaspectratio]{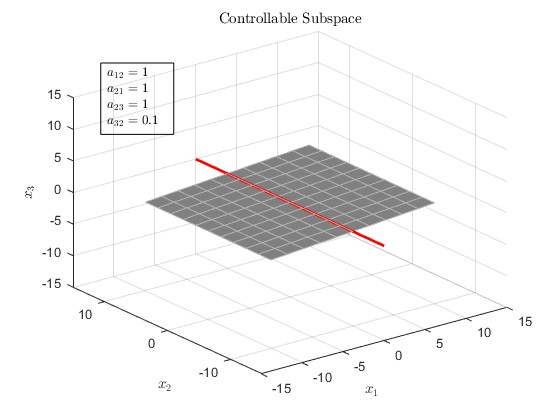}}}
    \subfigure[$a_{12}=a_{32}$]{
        {\includegraphics[width=1.71in,height=4in,clip,keepaspectratio]{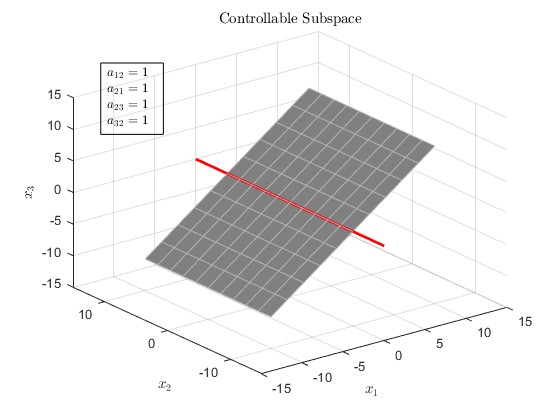}}}
    \subfigure[$a_{12}\ll a_{32}$]{
        {\includegraphics[width=1.71in,height=4in,clip,keepaspectratio]{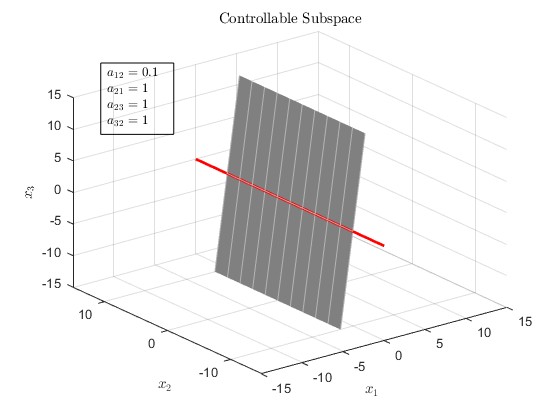}}}
    \caption{The controllable subspace of Equation \eqref{cont_ex1} varies by the network parameters.
    However, here we illustrate the most characteristic configurations under varying network parameters. The $x_2$-axis is represented by the red line.
    (a) The controllable subspace nearly aligns with the $x_1x_2$-plane when $a_{12}$ is dominant over $a_{32}$. 
    (b)The controllable subspace when $a_{12}$ is equal to $a_{32}$.
    (c) The controllable subspace almost aligns with the $x_2x_3$-plane when $a_{32}$ is dominant over $a_{12}$.
    }   
    \label{fig_CS}
\end{figure*}

\begin{example}
Let us consider $A_{\Lambda}$ and $B_{\Lambda}$ as:
\begin{align} \label{ex_cont2}
\begin{split}
&A_{\Lambda}=
\begin{bmatrix}
0 & a_{12} & 0 \\
a_{21} & 0 & a_{23} \\
0 & a_{32} & 0 \\
\end{bmatrix}, \,\,
B_{\Lambda}=
\begin{bmatrix}
0 \\
1 \\
0
\end{bmatrix}.
\end{split}
\end{align}

The controllability matrix of \eqref{ex_cont2} is given by:
\begin{align}\label{cont_ex1}
\begin{split}
\mathcal{C}_{\Lambda}=
\begin{bmatrix}
0 & a_{12} & 0 \\
1 & 0        & a_{12}a_{21}+a_{23}a_{32} \\
0 & a_{32} & 0 \\
\end{bmatrix},
\end{split}
\end{align}\textcolor{black}{
where the rank of $\mathcal{C}_{\Lambda}$ is 2, it follows that the controllable subspace, i.e., the column space of \eqref{cont_ex1}, is a two-dimensional space.}
Furthermore, the second row is linearly independent of the first and third rows,
which means that either the state pair $x_1,x_2$ or $x_2,x_3$ can be controllable, but the states $x_1$ and $x_3$ cannot be controlled simultaneously.
It implies that $x_2$ is controllable independently, and $x_1$ and $x_3$ are dependent on each other.
\end{example}

The above example demonstrates that the controllability of certain states may be uncertain depending on the variations of network parameters.
\textcolor{black}{
For structured networks, a state that can reach a desired point regardless of network parameters is termed a \textit{controllable state}.
According to \cite{commault2017fixed}, for a state to be considered a controllable state, its corresponding standard basis vector must be included in all feasible controllable subspaces of \eqref{eq:2_1}.
To determine controllable states, the authors in \cite{commault2017fixed} introduced the notion of a \textit{fixed controllable subspace}, 
which represents the intersection of all controllable subspaces with the same generic dimension.
Let a standard column basis corresponding to the state $x_i$ be denoted as $e_i$, where $e_i\in \mathbb{R}^n$ is an $n$-dimensional vector with only one non-zero value at $i$-th element.
For the structured network given by \eqref{eq:2_1}, we say that a state $x_i\in \mathbb{R}^1,i\in\{1,...,n\}$, is a controllable state
if its corresponding standard basis $e_i$ is in the fixed controllable subspace for $i\in\{1,...,n\}$.}
To facilitate a better understanding, let us consider the controllable subspaces of \eqref{cont_ex1} by the variation of network parameters shown in Fig.~\ref{fig_CS}.
As illustrated, the $x_1$ and $x_3$ axes are linearly dependent on each other and together form a plane within the controllable subspace.
Interestingly, the $x_2$-axis, indicated by the red line in Fig.~\ref{fig_CS}, remains within the controllable subspace regardless of network parameter variations. 
\textcolor{black}{Hence, the fixed controllable subspace of \eqref{cont_ex1} is composed only of the $x_2$-axis.
It follows that the corresponding standard basis $e_2\in\mathbb{R}^{3}$ is included in the fixed controllable subspace. 
In this case, we say that the state $x_2$ is a controllable state.}
The concept of a fixed controllable subspace allows us to determine controllable states independently of variations in network parameters.
This approach aids in identifying states that can be independently controlled, offering a more intuitive understanding of the controllability properties within structured networks.

\subsection{Graph theory representation}
In the previous subsection, we introduced the controllable subspace of structured networks from a state space point of view. In this section, 
we explore the controllable subspace of structured networks from a graph theory perspective.
The structured network given by \eqref{eq:2_1} can be represented as a graph with the set of state nodes and the set of edges:
\begin{equation} \label{eq:4}
\mathcal{G}(\mathcal{V},\mathcal{E}),
\end{equation}
where $\mathcal{V}$ is a set of state nodes, and $\mathcal{E}$ is the set of edges, which includes the non-zero/zero connection between the state nodes in $\mathcal{V}$.
A state node $k\in\mathcal{V}$ with an input $u$ with an edge $(u,k)\in\mathcal{E}$ is referred to as a \textit{leader}. 
We denote the set of leaders in $\mathcal{G}$ as $\mathcal{V}_{\mathcal{L}}\subset\mathcal{V}$.
The set of in-neighboring nodes of $i$ is symbolically written as $\mathcal{N}_{i}$, that is, 
if the edge weight $a_{ij}$ is non-zero, then there exists an edge $(j,i)\in\mathcal{E}$ and $j\in\mathcal{N}_i$.
The \textit{path} in $\mathcal{G}$ from a state node $i_1$ to a state node $i_p$ 
is a sequence of the edges $(i_1,i_2), (i_2,i_3), ... , (i_{p-1},i_p)$ such that 
$i_k\in\mathcal{V}$ for $k\in\{1,...,p\}$ and $(i_{k},i_{k+1})\in\mathcal{E}$ for $k\in\{1,...,p-1\}$.
Under the assumption that the path does not meet the same vertex twice, a path satisfying $i_1 = i_p$ is called \textit{cycle} and a path satisfying $i_1\in\mathcal{V}_\mathcal{L}$ is called a \textit{stem}.
A graph that contains no cycles is called a \textit{directed acyclic graph (DAG)}.
We say that $\mathcal{G}$ is influenceable, if there exists a path from at least one leader to any state node.
Since state nodes without a path from at least one leader are uncontrollable, this paper only considers the graphs that are influenceable.

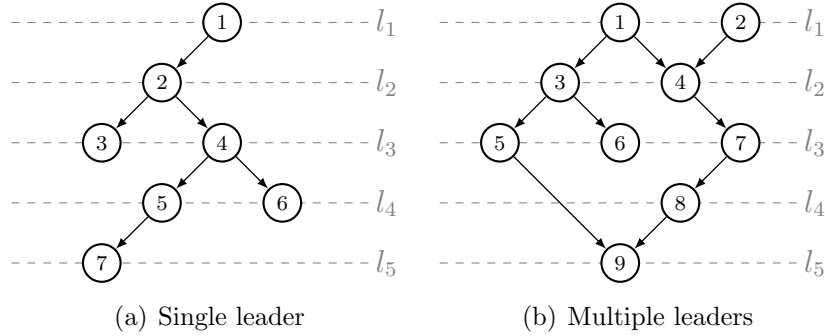
\begin{figure}[t]
\centering
\subfigure[Single leader]{
\begin{tikzpicture}[scale=0.8]
\draw[dashed, gray] (-1,4) -- (5,4) node[left, pos=1.1] { $l_1$};
\draw[dashed, gray] (-1,3) -- (5,3) node[left, pos=1.1] { $l_2$};
\draw[dashed, gray] (-1,2) -- (5,2) node[left, pos=1.1] { $l_3$};
\draw[dashed, gray] (-1,1) -- (5,1) node[left, pos=1.1] { $l_4$};
\draw[dashed, gray] (-1,0) -- (5,0) node[left, pos=1.1] { $l_5$};

\node[place, circle,minimum size=0.5cm] (node1) at (2.5,4) [] {\scriptsize$1$};
\node[place, circle,minimum size=0.5cm] (node2) at (1.5,3) [] {\scriptsize$2$};
\node[place, circle,minimum size=0.5cm] (node3) at (0.5,2) [] {\scriptsize$3$};
\node[place, circle,minimum size=0.5cm] (node4) at (2.5,2) [] {\scriptsize$4$};
\node[place, circle,minimum size=0.5cm] (node5) at (1.5,1) [] {\scriptsize$5$};
\node[place, circle,minimum size=0.5cm] (node6) at (3.5,1) [] {\scriptsize$6$};
\node[place, circle,minimum size=0.5cm] (node7) at (0.5,0) [] {\scriptsize$7$};

\draw (node1) [-latex, line width=0.5pt] -- node [right]  {} (node2);
\draw (node2) [-latex, line width=0.5pt] -- node [right]  {} (node3);
\draw (node2) [-latex, line width=0.5pt] -- node [right]  {} (node4);
\draw (node4) [-latex, line width=0.5pt] -- node [right]  {} (node5);
\draw (node4) [-latex, line width=0.5pt] -- node [right]  {} (node6);
\draw (node5) [-latex, line width=0.5pt] -- node [right]  {} (node7);
\end{tikzpicture}
}
\subfigure[Multiple leaders]{
\begin{tikzpicture}[scale=0.8]

\draw[dashed, gray] (-1,4) -- (5,4) node[left, pos=1.1] { $l_1$};
\draw[dashed, gray] (-1,3) -- (5,3) node[left, pos=1.1] { $l_2$};
\draw[dashed, gray] (-1,2) -- (5,2) node[left, pos=1.1] { $l_3$};
\draw[dashed, gray] (-1,1) -- (5,1) node[left, pos=1.1] { $l_4$};
\draw[dashed, gray] (-1,0) -- (5,0) node[left, pos=1.1] { $l_5$};

\node[place, circle,minimum size=0.5cm] (node1) at (2,4) [] {\scriptsize$1$};
\node[place, circle,minimum size=0.5cm] (node2) at (4,4) [] {\scriptsize$2$};
\node[place, circle,minimum size=0.5cm] (node3) at (1,3) [] {\scriptsize$3$};
\node[place, circle,minimum size=0.5cm] (node4) at (3,3) [] {\scriptsize$4$};
\node[place, circle,minimum size=0.5cm] (node5) at (0,2) [] {\scriptsize$5$};
\node[place, circle,minimum size=0.5cm] (node6) at (2,2) [] {\scriptsize$6$};
\node[place, circle,minimum size=0.5cm] (node7) at (4,2) [] {\scriptsize$7$};
\node[place, circle,minimum size=0.5cm] (node8) at (3,1) [] {\scriptsize$8$};
\node[place, circle,minimum size=0.5cm] (node9) at (2,0) [] {\scriptsize$9$};

\draw (node1) [-latex, line width=0.5pt] -- node [right]  {} (node3);
\draw (node1) [-latex, line width=0.5pt] -- node [right]  {} (node4);
\draw (node2) [-latex, line width=0.5pt] -- node [right]  {} (node4);
\draw (node3) [-latex, line width=0.5pt] -- node [right]  {} (node5);
\draw (node3) [-latex, line width=0.5pt] -- node [right]  {} (node6);
\draw (node4) [-latex, line width=0.5pt] -- node [right]  {} (node7);
\draw (node5) [-latex, line width=0.5pt] -- node [right]  {} (node9);
\draw (node7) [-latex, line width=0.5pt] -- node [right]  {} (node8);
\draw (node8) [-latex, line width=0.5pt] -- node [right]  {} (node9);

\end{tikzpicture}
}
\caption{Examples of $5$-layered DAGs. (a) The set of fixed nodes is $\{1,2,7\}$ with a single leader $1\in\mathcal{V}_{\mathcal{L}}$.
(b) The set of fixed nodes is $\{1,2,3,4,7,8,9\}$ with two leaders $1,2\in\mathcal{V}_{\mathcal{L}}$.}
\label{fig1}
\end{figure}

\subsection{The controllable subspace : graph characterization}
From a graph point of view, the generic dimension of controllable subspace for DAGs is given by:
\begin{theorem}\label{thm_hosoe}
 \cite{hosoe1980determination}
For a graph $\mathcal{G}$, 
the generic dimension of controllable subspace 
is the maximum number of state nodes that can be covered by a disjoint set of stems in $\mathcal{G}$.
\end{theorem}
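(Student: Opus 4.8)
The plan is to identify the generic dimension of the controllable subspace with the generic rank of $\mathcal{C}_\Lambda$ in \eqref{eq:3}, and then to invoke the standard fact that for a matrix whose entries are polynomials in the parameters $\Lambda$, the generic rank equals the largest $r$ for which some $r\times r$ minor is a nonzero polynomial. First I would record the combinatorial meaning of the entries: indexing a column of $\mathcal{C}_\Lambda$ by a pair $(k,j)$, meaning $A_\Lambda^k$ applied to the leader $j$, and a row by a state node $i$, the corresponding entry equals the sum over all walks of length $k$ from $j$ to $i$ of the product of the edge weights along the walk. Because $\mathcal{G}$ is a DAG, every such walk is in fact a path and $A_\Lambda^k$ vanishes once $k$ exceeds the longest path, so each entry is a finite signed sum of monomials in the edge weights, one per path. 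The theorem then amounts to showing that the maximal size of a generically nonzero minor equals the maximal number of nodes coverable by vertex-disjoint stems.

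For the lower bound I would argue constructively. Given vertex-disjoint stems $P_1,\dots,P_s$ emanating from distinct leaders and covering $d$ nodes in total, I select, for a stem with topologically ordered nodes $j=v_0,v_1,\dots,v_l$, the columns $(0,j),(1,j),\dots,(l,j)$, and I take the rows to be exactly the $d$ covered nodes. Ordering the rows consistently with the topological order along each stem puts the resulting $d\times d$ submatrix into a form in which the diagonal entry for $v_t$ contains the single-path weight $\prod_{r=0}^{t-1} a_{v_{r+1}v_r}$. The product of the diagonal entries is then a monomial in which every edge weight of the stems appears exactly once, and vertex-disjointness guarantees that no other permutation of the minor can reproduce this monomial; hence the minor is a nonzero polynomial and the generic rank is at least $d$.

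For the upper bound I would show that any generically nonzero $r\times r$ minor forces the existence of $r$ vertex-disjoint stems covering its $r$ chosen rows. Expanding the minor as a signed sum over permutations, each surviving monomial encodes, for every chosen row $i$, a path from a leader to $i$ matched to the chosen columns. The key step is a Lindstr\"om--Gessel--Viennot-type cancellation: if in such a system two of the paths shared a vertex, swapping the two tails at the first shared vertex produces another term of the expansion with opposite sign and identical weight, so all contributions from vertex-intersecting path systems cancel in pairs. This is exactly where acyclicity is indispensable, since it makes the tail-swapping involution well defined and sign-reversing. Consequently a nonzero minor can only be produced by a family of pairwise vertex-disjoint paths from leaders, that is, by $r$ disjoint stems covering the $r$ rows, so the maximal disjoint-stem coverage is at least $r$.

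Combining the two bounds yields equality, which is the assertion of the theorem. I expect the delicate point to be the upper-bound cancellation argument: the entries of $\mathcal{C}_\Lambda$ are far from algebraically independent, since a single edge weight recurs throughout the powers $A_\Lambda^k$, so the naive ``generic rank equals bipartite matching'' reasoning is invalid. Making the involution precise---specifying the first intersection vertex, the tail-swap, and the resulting sign change, and checking that the fixed points of the involution are exactly the vertex-disjoint systems---is the technical heart of the proof, and it is the step that genuinely exploits the DAG hypothesis rather than merely the structured-matrix formalism.
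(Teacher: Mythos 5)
The paper does not prove this statement; it is imported verbatim from \cite{hosoe1980determination}, so your attempt must be judged against the known proofs rather than anything in the text.

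Your upper bound contains a genuine gap: the Lindstr\"om--Gessel--Viennot involution is not available on the static graph, because the columns of $\mathcal{C}_\Lambda$ fix the \emph{length} of each path, not just its endpoints. In a term of an $r\times r$ minor, row $i$ is matched to a column $(k,j)$ together with a path of length exactly $k$ from leader $j$ to $i$. If two such paths, say $P=P'vP''$ and $Q=Q'vQ''$ meeting first at $v$, have their tails swapped, the total weight is preserved but the individual lengths become $|P'|+|Q''|$ and $|Q'|+|P''|$, which match the original column indices only when $|P'|=|Q'|$. In general the swapped system is therefore \emph{not} a term of the same minor, the map is not a sign-reversing involution on its expansion, and intersecting systems do not cancel. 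A concrete counterexample: a single leader with the chain $1\to 2\to 3$. The full $3\times 3$ controllability minor is generically nonsingular (it is triangular with diagonal $1,\,a_{21},\,a_{32}a_{21}$), yet every path system in its expansion consists of nested prefixes of the chain, all sharing vertex $1$; your claimed cancellation would force this minor to vanish, and more broadly would bound the generic rank by the number of leaders, since pairwise vertex-disjoint stems must start at distinct leaders. Note this also makes your two halves mutually inconsistent: the witness in your lower bound (one column $(t,j)$ per covered node, served by the nested prefixes of each stem) is precisely a family of pairwise \emph{intersecting} paths that your upper bound asserts must cancel. The theorem counts the total number of nodes covered by at most $m$ disjoint stems, not the number of pairwise disjoint paths ending at the chosen rows.

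The known correct route, due to Hosoe and later recast by Poljak \cite{poljak1990generic}, runs the cancellation in the time-expanded (``dynamic'') graph: the column $(k,j)$ becomes a copy of leader $j$ injected $k$ steps before the terminal layer, all rows sit in the terminal layer, two paths can only meet as the same vertex \emph{at the same time} (so the prefixes to a meeting point automatically have equal length), and the tail-swap is then a genuine involution. What it yields directly is that nonzero minors correspond to linkings vertex-disjoint in the dynamic graph---which may reuse a static node at different times, exactly as the chain example does---and a further nontrivial combinatorial step is needed to convert a maximum dynamic linking into static disjoint stems covering the same number of nodes. Separately, your lower bound has a fixable slip: the diagonal product is not multilinear in the stem weights, since the prefix monomials repeat edges ($a_{v_1v_0}$ occurs in every prefix of length at least one), so ``every edge weight appears exactly once'' is false and monomial uniqueness does not follow from disjointness as claimed. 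The painless repair is specialization: set all stem weights to $1$ and all remaining weights to $0$; the selected $d\times d$ submatrix becomes block-triangular with determinant $\pm 1$, hence the minor is a nonzero polynomial and the generic rank is at least $d$.
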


Note that the maximum number of state nodes that can be covered by disjoint sets in a graph $\mathcal{G}$ is unique, while the disjoint set of stems in $\mathcal{G}$ may not be unique. 
For this reason, the authors in \cite{commault2017fixed} introduced a graph characterization of the fixed controllable subspace from an individual node perspective. 
\textcolor{black}{Specifically, in a graph $\mathcal{G}$, represented by the structured network \eqref{eq:2_1},
a state node $k\in\mathcal{V}$ is referred to as a \textit{fixed node} if its corresponding state $x_k$ is a controllable state.
In other words, the corresponding standard basis $e_k\in\mathbb{R}^{n}$ is included in the fixed controllable subspace.}

\begin{theorem}\label{thm_commault}
\cite{commault2017fixed}
Let us consider a state node $k\in\mathcal{V}$ in a graph $\mathcal{G}$.
Then, $k$ is a fixed node if and only if
$k$ becoming a leader with an additional input 
does not increase the generic dimension of controllable subspace of $\mathcal{G}$.
\end{theorem}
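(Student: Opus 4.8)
The plan is to recast ``$k$ is a fixed node'' as a statement about the maximal (generic) dimension of the controllable subspace and then to compare two systems: the original graph $\mathcal{G}$ and the graph $\mathcal{G}^{+}$ obtained by attaching a new input to node $k$. Write $\mathcal{C}_\Lambda$ for the column space of the controllability matrix \eqref{eq:3} under parameters $\Lambda$, let $d$ be its generic (i.e.\ maximal over $\Lambda$) dimension, and set $S=\{\Lambda:\dim\mathcal{C}_\Lambda=d\}$, so the fixed controllable subspace is $V_{\mathrm{fix}}=\bigcap_{\Lambda\in S}\mathcal{C}_\Lambda$ and $k$ is a fixed node iff $e_k\in V_{\mathrm{fix}}$. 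For $\mathcal{G}^{+}$ the input matrix becomes $[B_\Lambda\; e_k]$; write $\mathcal{C}^{+}_\Lambda$ for its controllable subspace and $d^{+}$ for its generic dimension. In this notation the claim to prove is simply $e_k\in V_{\mathrm{fix}}\iff d^{+}=d$.

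The one computational ingredient I would isolate first is an invariance lemma. For every fixed $\Lambda$,
\[
\mathcal{C}^{+}_\Lambda=\mathcal{C}_\Lambda+\langle A_\Lambda\mid e_k\rangle,
\]
where $\langle A_\Lambda\mid e_k\rangle$ is the smallest $A_\Lambda$-invariant subspace containing $e_k$. Since $\mathcal{C}_\Lambda$ is itself $A_\Lambda$-invariant, $e_k\in\mathcal{C}_\Lambda$ forces $\langle A_\Lambda\mid e_k\rangle\subseteq\mathcal{C}_\Lambda$; hence $\mathcal{C}^{+}_\Lambda=\mathcal{C}_\Lambda$ exactly when $e_k\in\mathcal{C}_\Lambda$, and otherwise $\mathcal{C}^{+}_\Lambda\supsetneq\mathcal{C}_\Lambda$. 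In words, for this particular $\Lambda$ the extra input at $k$ enlarges the controllable subspace precisely when $e_k$ is not already contained in it.

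For the direction $d^{+}=d\Rightarrow k$ fixed, which is the cleaner one, I would argue pointwise on $S$ with no appeal to genericity. Fix any $\Lambda\in S$. Because the generic dimension is the maximum over all parameters, $\dim\mathcal{C}^{+}_\Lambda\le d^{+}$; combining this with $\mathcal{C}_\Lambda\subseteq\mathcal{C}^{+}_\Lambda$ yields the sandwich $d=\dim\mathcal{C}_\Lambda\le\dim\mathcal{C}^{+}_\Lambda\le d^{+}=d$. Thus $\mathcal{C}^{+}_\Lambda=\mathcal{C}_\Lambda$, and the invariance lemma gives $e_k\in\mathcal{C}_\Lambda$. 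As $\Lambda\in S$ was arbitrary, $e_k\in V_{\mathrm{fix}}$, i.e.\ $k$ is fixed.

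For the converse, $k$ fixed $\Rightarrow d^{+}=d$, the invariance lemma gives $\dim\mathcal{C}^{+}_\Lambda=d$ for every $\Lambda\in S$ (using $e_k\in\mathcal{C}_\Lambda$), and it only remains to see that the value $d^{+}$ is actually attained somewhere on $S$. Here I would invoke the standard fact that the maximal rank of a polynomial matrix is attained off a proper subvariety, so both $S$ and $T=\{\Lambda:\dim\mathcal{C}^{+}_\Lambda=d^{+}\}$ are dense; choosing $\Lambda^{*}\in S\cap T$ gives $d^{+}=\dim\mathcal{C}^{+}_{\Lambda^{*}}=d$. This intersection-of-generic-sets step is the only delicate point, since it is exactly the gap between ``holds generically'' and ``holds on all of $S$''; note that the sandwich used in the forward direction is what lets me avoid that gap there entirely. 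As a cross-check and alternative route I would keep \emph{Theorem~\ref{thm_hosoe}} in reserve: $d$ and $d^{+}$ equal the maximum numbers of nodes coverable by disjoint stems in $\mathcal{G}$ and $\mathcal{G}^{+}$, so $d^{+}=d$ can equivalently be read combinatorially as ``no maximum disjoint-stem cover is enlarged by permitting a stem rooted at $k$,'' which aligns with the layered interpretation developed later in the paper.
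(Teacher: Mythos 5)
You should know at the outset that the paper contains no proof of this statement: Theorem~\ref{thm_commault} is imported verbatim from \cite{commault2017fixed} and used as a black box, so your argument must be judged on its own rather than against an in-paper proof. On its own terms it is correct and well structured. The identity $\mathcal{C}^{+}_\Lambda=\mathcal{C}_\Lambda+\langle A_\Lambda\mid e_k\rangle$ is the standard formula for the reachable subspace of the pair $(A_\Lambda,[B_\Lambda\;e_k])$; the equivalence $\mathcal{C}^{+}_\Lambda=\mathcal{C}_\Lambda\iff e_k\in\mathcal{C}_\Lambda$ follows exactly as you say from $A_\Lambda$-invariance of $\mathcal{C}_\Lambda$ in one direction and from $e_k\in\mathcal{C}^{+}_\Lambda$ in the other; and the sandwich $d=\dim\mathcal{C}_\Lambda\le\dim\mathcal{C}^{+}_\Lambda\le d^{+}=d$ makes the direction $d^{+}=d\Rightarrow e_k\in V_{\mathrm{fix}}$ entirely pointwise on $S$, with no genericity needed. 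Your setup also matches the paper's conventions correctly: since input weights are fixed to $1$ and do not affect controllability, $d$ and $d^{+}$ are maxima over the \emph{same} parameter set $Q(A)$, which is what the inequality $\dim\mathcal{C}^{+}_\Lambda\le d^{+}$ tacitly requires.

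One refinement at the step you yourself flag as delicate. In the converse you write that $S$ and $T$ ``are dense'' and then choose $\Lambda^{*}\in S\cap T$; density alone does not yield a nonempty intersection (two dense sets can be disjoint). What actually closes the step is the stronger fact you invoked just before: each of $S$ and $T$ is the complement, within the admissible set $Q(A)$, of a proper algebraic subvariety (the common vanishing locus of all $d\times d$, respectively $d^{+}\times d^{+}$, minors of the relevant controllability matrix), hence each is open --- by lower semicontinuity of rank --- as well as dense, equivalently of full measure. The union of two proper subvarieties is proper, so $S\cap T\neq\emptyset$ and the argument is airtight once stated this way. Your reserve route through \textit{Theorem~\ref{thm_hosoe}} is consistent with how the paper actually deploys Theorem~\ref{thm_commault} (it only ever uses the combinatorial reading via maximum disjoint stems, as in \textit{Theorem~\ref{thm1}} and \textit{Theorem~\ref{thm3}}), but it is not needed for the proof itself.
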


Based on \textit{Theorem~\ref{thm_hosoe}}, the above theorem provides a method to determine the fixed nodes in a graph.

\begin{example}
Let us consider a DAG $\mathcal{G}(\mathcal{V},\mathcal{E})$ with a single leader as shown in Fig.~\ref{fig1}(a).
The stem starting at $1\in\mathcal{V}_{\mathcal{L}}$ that covers the maximum number of state nodes is $(1\rightarrow2\rightarrow4\rightarrow5\rightarrow7)$.
From \textit{Theorem~\ref{thm_hosoe}}, the generic dimension of controllable subspace is 5.
Now, let us consider the state node $4$ becoming a leader with additional input, i.e., $4\in\mathcal{V}_{\mathcal{L}}$. 
Then, the set of two disjoint stems starting at $1,4\in\mathcal{V}_{\mathcal{L}}$ that covers the maximum number of state nodes is $\{(1\rightarrow2\rightarrow3),(4\rightarrow5\rightarrow7)\}$. 
In this case, the number of state nodes in the two disjoint stems is 6, which is greater than the generic dimension of controllable subspace without additional input. 
According to \textit{Theorem~\ref{thm_commault}}, the state node $4\in\mathcal{V}$ is not a fixed node.
On the contrary, if $2$ or $7$ becomes a leader, the generic dimension of controllable subspace does not increase, and thus, the state nodes $2$ and $7$ are considered to be fixed nodes. 
Note that since connecting multiple inputs to a state node does not increase the generic dimension of controllable subspace,
the leaders with existing inputs are trivially considered fixed nodes.
\end{example}

However, the method of determining fixed nodes using the conditions stated in \textit{Theorem~\ref{thm_commault}} requires a brute-force search, which can be computationally expensive. 
To address this issue, an alternative approach for determining fixed nodes in dynamic graphs has been proposed in \cite{van2019dynamic}, using the concept of \textit{supremal minimal separators}. 
Nevertheless, since the necessary and sufficient condition for fixed nodes presented in \cite{van2019dynamic} remains a conjecture, it continues to be an open problem. 
In summary, characterizing controllable subspaces and fixed nodes in structured networks can be achieved through both state-space and graph-theoretic perspectives. 
While existing literature has made progress, further research is needed to devise efficient methods for determining fixed nodes in graphs. 
In the following section, we characterize conditions for determining fixed nodes in DAGs.

\section{The Fixed Nodes in DAGs}\label{sec_fix}
Based on the necessary and sufficient condition for fixed nodes introduced in \textit{Theorem~\ref{thm_commault}}, 
this section provides several conditions of the fixed node in DAGs based on the notion of maximum disjoint stems.
\textcolor{black}{From \cite{liu2012control,della2020symmetries,yeh1998revised,bianchini2001processing},
it is well-known that every DAG can be represented as a unique hierarchical structure with layers, which simplifies the analysis of network controllability.}
Each state node in a DAG can be assigned a unique layer index using the recursive labeling algorithm in \cite{liu2012control,yan2010comparing}.
Given a DAG with the set of leaders $\mathcal{V}_{\mathcal{L}}$, the modified algorithm from \cite{liu2012control,yan2010comparing} could consist of the following steps:

\begin{itemize}
\item [(1)] \textcolor{black}{State nodes without incoming edges are labeled with layer index 1 (top layer), i.e., the highest level of influence.}
\item [(2)] \textcolor{black}{Remove all state nodes in layer 1, and the state nodes in the remaining graph without incoming edges are labeled with layer index 2.}
\item [(3)] \textcolor{black}{Repeat step (2) until all state nodes are labeled.}
\end{itemize}

This algorithm allows us to represent any DAG as a $p$-layered hierarchical structure, which we call a $p$-layered DAG.
In a $p$-layered DAG, state nodes in the same layer do not have edges with each other, ensuring the separation of influence levels,
and state nodes in a layer can only have edges toward state nodes in lower layers (downstream), maintaining the directed acyclic property.
It also means that the edge of the $p$-layered DAG may be connected between layers that are not adjacent to each other.
The layers of a $p$-layered DAG are denoted as $l_k$ for $k\in\{1,...,p\}$, and the set of state nodes in each layer is denoted as $\mathcal{V}_{l_k}$.
Note that from the initial step (1), all leaders in $\mathcal{V}_{\mathcal{L}}$ can only exist in $\mathcal{V}_{l_1}$, i.e., $\mathcal{V}_{\mathcal{L}}\subseteq\mathcal{V}_{l_1}$.

\subsection{Single leader}
For the case of a single leader, consider the following necessary and sufficient conditions for fixed nodes in DAGs.

\begin{theorem}\label{thm1}
For a $p$-layered DAG $\mathcal{G}(\mathcal{V},\mathcal{E})$ with a single leader $j\in\mathcal{V}_{\mathcal{L}}$, 
the state node $i\in \mathcal{V}_{l_k}$ is a fixed node if and only if $|\mathcal{V}_{l_k}|=1$ for $k\in\{1,...,p\}$.
\end{theorem}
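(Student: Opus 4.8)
The plan is to prove the statement through \textit{Theorem~\ref{thm_commault}}, which reduces the question of whether $i$ is fixed to whether promoting $i$ to a second leader increases the generic dimension of the controllable subspace; by \textit{Theorem~\ref{thm_hosoe}} this dimension equals the maximum number of nodes covered by a disjoint set of stems. First I would record two baseline facts about the single-leader setting. Since $\mathcal{G}$ is influenceable and $j$ is the only leader, every node is reachable from $j$; because a layer-$1$ node has no incoming edge, no node other than $j$ can sit in $l_1$, so $|\mathcal{V}_{l_1}|=1$. Moreover, along any path the layer index is strictly increasing (edges only go downstream), so a path meets each layer at most once; combined with the fact that a node in $l_k$ admits a path from $j$ visiting exactly one node of each of $l_1,\dots,l_k$, this shows the longest stem from $j$ has precisely $p$ nodes, i.e.\ the generic dimension with the single leader equals $p$.

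For the necessity direction (if $i$ is fixed then $|\mathcal{V}_{l_k}|=1$) I would argue the contrapositive by an explicit construction. Assume $|\mathcal{V}_{l_k}|\geq 2$ and pick $i'\in\mathcal{V}_{l_k}$ with $i'\neq i$. If some longest stem from $j$ avoids $i$, pair it with the singleton stem $(i)$ to cover $p+1$ nodes. Otherwise every longest stem passes through $i$; fix one, $j=w_1\to\cdots\to w_k=i\to\cdots\to w_p$, and split it at $i$. Its tail $i\to w_{k+1}\to\cdots\to w_p$ is a stem from the new leader $i$ lying in layers $k,\dots,p$, while a longest path from $j$ to $i'$ is a stem lying in layers $1,\dots,k$ and ending at $i'\neq i$. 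These two stems can meet only in layer $k$, where they occupy the distinct nodes $i'$ and $i$, so they are disjoint and together cover $k+(p-k+1)=p+1$ nodes. In either case the generic dimension rises above $p$, so by \textit{Theorem~\ref{thm_commault}} node $i$ is not fixed.

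For sufficiency (if $|\mathcal{V}_{l_k}|=1$ then $i$ is fixed) the plan is to bound the two disjoint stems arising after $i$ becomes a leader. A stem from $i$ starts in layer $k$ and hence lives in layers $k,\dots,p$, so it has at most $p-k+1$ nodes. The remaining work is to show that a stem from $j$ that is disjoint from it can reach only layers $1,\dots,k-1$, giving at most $k-1$ nodes and a total of at most $(k-1)+(p-k+1)=p$; the absence of an increase then yields that $i$ is fixed. This reduction rests on the claim that the unique node $i$ of layer $k$ lies on every path from $j$ into the layers deeper than $k$, so that any $j$-stem avoiding $i$ is trapped above layer $k$.

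Establishing this separator property is exactly where I expect the main difficulty to lie, and it is the step that must be handled with the greatest care. Because the hierarchical structure permits edges between non-adjacent layers, one must rule out a stem from $j$ that jumps over layer $k$ directly into a deeper layer and thereby bypasses $i$; it is precisely such a layer-skipping route that would break the length bound above. I would therefore concentrate the technical effort on showing that, when $i$ is the only node of layer $k$, no stem rooted at $j$ can reach a node of layer greater than $k$ without passing through $i$, exploiting the single-leader reachability structure to certify that $i$ acts as the unique gateway from the upper layers to the lower ones. If this gateway property can be secured, the two directions above combine to give the claimed equivalence.
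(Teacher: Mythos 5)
Your necessity direction is correct and is essentially the paper's argument made more careful: the baseline facts you record (layer indices strictly increase along any path; every node of $\mathcal{V}_{l_k}$ is reached from $j$ by a stem meeting exactly one node of each of $l_1,\dots,l_k$, since each node of layer $m$ has an in-neighbor in layer $m-1$; hence the single-leader generic dimension equals $p$) all hold, and your two-case split — pairing a longest stem avoiding $i$ with the one-node stem $(i)$, or else splitting a longest stem at $w_k=i$ and pairing its tail with a one-node-per-layer stem from $j$ to $i'$ — cleanly produces $p+1$ covered nodes, so \textit{Theorem~\ref{thm_commault}} applies. The paper's \textit{only if} proof follows the same two cases but with less justification.

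The gap you flagged in the sufficiency direction, however, is not merely the hard step — the gateway property you propose to establish is false in the generality the paper works in, precisely because of the layer-skipping edges you worried about (the paper explicitly permits edges ``between layers that are not adjacent to each other''). Concretely, take $\mathcal{V}=\{1,\dots,6\}$, $\mathcal{E}=\{(1,2),(1,3),(2,4),(3,4),(4,5),(4,6),(3,6)\}$, leader $1$; the labeling gives $\mathcal{V}_{l_1}=\{1\}$, $\mathcal{V}_{l_2}=\{2,3\}$, $\mathcal{V}_{l_3}=\{4\}$, $\mathcal{V}_{l_4}=\{5,6\}$, so $p=4$ and layer $3$ is a singleton. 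The stem $1\rightarrow3\rightarrow6$ reaches layer $4$ while bypassing node $4$, so node $4$ is not a separator between the upper and lower layers, and your length bound $(k-1)+(p-k+1)\leq p$ collapses. Worse, after promoting node $4$ to a leader, the disjoint stems $1\rightarrow3\rightarrow6$ and $4\rightarrow5$ cover five nodes, exceeding the single-leader dimension $p=4$, so by \textit{Theorem~\ref{thm_commault}} node $4$ is not fixed despite $|\mathcal{V}_{l_3}|=1$ (one can confirm this algebraically: adding an input at node $4$ raises the generic rank of the controllability matrix from $4$ to $5$). So your sufficiency plan cannot be completed as outlined; notably, the paper's own proof leans on the equivalent unjustified assertion that ``any stem with length $k$ or greater always includes $i$,'' which the same example defeats. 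Both your argument and the paper's do go through under the extra hypothesis that every edge joins consecutive layers (then the unique node of layer $k$ genuinely blocks all routes from $j$ into layers below $k$), so the honest conclusions are: state and prove sufficiency under that restriction, or seek a fundamentally different argument — in the paper's stated generality there is none to be found, as the example above shows.
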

\begin{proof}
Let $\mathcal{G}_S^{max}$ be the stem that covers the maximum number of state nodes in $\mathcal{G}$.
Note that for a $p$-layered DAG with a single leader, the maximum number of state nodes covered by a stem is equal to the maximum index of layer $p$.
For the \textit{if} condition, suppose that a state node $i\in \mathcal{V}_{l_k}$ is the only node in its layer, i.e., $|\mathcal{V}_{l_k}|=1$. 
Then, the length of the stem from the leader $j$ to $i$ is $k$.
Furthermore, any stem with length $k$ or greater always includes $i$ in its sequence.
 Hence, even if $i$ becomes an additional leader, 
the maximum number of state nodes covered by two stems (starting from $i$ and $j$) is still at most the number of state nodes covered by the maximum disjoint stem $\mathcal{G}_S^{max}$.

For the \textit{only if} condition, suppose that $i\in \mathcal{V}_{l_k}$ is a fixed node with $|\mathcal{V}_{l_k}|>1$.
Then, from \textit{Theorem~\ref{thm_commault}}, the maximum number of state nodes covered by stems should not increase even if $i$ becomes an additional leader.
To show that this assumption is contradictory, consider the following two cases:
First, if $i$ is not included in $\mathcal{G}_S^{max}$, 
then adding $i$ as an additional leader increases the generic dimension of controllable subspace since the new stem starting from $i$ covers at least one state node that was not covered before. 
Second, if $i$ is included in $\mathcal{G}_S^{max}$, then the stem starting from $i$ can always be included in $\mathcal{G}_S^{max}$, 
and the stem starting from $j$ can cover at least one more state node in $\mathcal{V}_{l_k}$. 
This contradicts the condition of fixed nodes stated in \textit{Theorem~\ref{thm_commault}}. 
Therefore, the fixed node must satisfy $|\mathcal{V}_{l_k}|=1$.
\end{proof}

The above theorem provides {black} for a state node to be a fixed node in a $p$-layered DAG with a single leader.
In other words, a state node is a fixed node if and only if it is the only node in its layer.

\begin{example}
Let us consider the $5$-layered DAG with a leader $1\in\mathcal{V}_{\mathcal{L}}$ shown in Fig.~\ref{fig1}(a). 
In this case, the fixed nodes are $1$, $2$, and $7$, which are the only state nodes in their respective layers.
However, the state nodes $3,4\in\mathcal{V}_{l_2}$ and $5,6\in\mathcal{V}_{l_3}$ are not fixed nodes because there are other nodes in their layers.
It is clear that the dimension of controllable subspace increases when non-fixed nodes become leaders.
\end{example}

\subsection{Multiple leaders}
For the case of multiple leaders, we can derive the following sufficient condition for fixed nodes using the same logic as the proof of the \textit{if} condition in \textit{Theorem~\ref{thm1}}:

\begin{corollary}\label{col1}
For a $p$-layered DAG $\mathcal{G}(\mathcal{V},\mathcal{E})$ with $m$-leader, 
the state node $i\in \mathcal{V}_{l_k}$ is a fixed node if there exists exactly one state node in $\mathcal{V}_{l_k}$, i.e., $|\mathcal{V}_{l_k}|=1$ for $k\in\{1,...,p\}$.
\end{corollary}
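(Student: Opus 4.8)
The plan is to prove \textit{Corollary~\ref{col1}} by reducing it directly to the \textit{if} direction already established in \textit{Theorem~\ref{thm1}}, since the corollary's hypothesis and conclusion mirror exactly the sufficient condition proved there, with the sole difference being the presence of $m$ leaders instead of a single leader. The corollary is a one-directional (sufficiency only) statement, so I do not need to worry about constructing a converse; the challenge is purely to verify that the argument for the single-leader \emph{if} part survives verbatim when multiple leaders are present.

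First I would recall the graph-theoretic criterion from \textit{Theorem~\ref{thm_commault}}: the node $i\in\mathcal{V}_{l_k}$ is fixed if and only if promoting $i$ to an additional leader does not increase the generic dimension of the controllable subspace, which by \textit{Theorem~\ref{thm_hosoe}} equals the maximum number of state nodes coverable by a disjoint set of stems. So the goal reduces to showing: if $|\mathcal{V}_{l_k}|=1$, then adding $i$ as a leader leaves this maximum stem-cover size unchanged. I would let $\mathcal{S}^{max}$ denote a maximum disjoint set of stems rooted at the existing leaders $\mathcal{V}_{\mathcal{L}}\subseteq\mathcal{V}_{l_1}$, covering the generic dimension number of nodes.

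The key structural observation, which I would emphasize as the crux, is that because $i$ is the unique node in layer $l_k$, every stem that reaches down to layer $l_k$ or below must pass through $i$ — there is no alternative node at that level, and by the labeling property edges only travel downward across layers. Hence in any disjoint stem set, at most one stem can descend past layer $l_{k-1}$, and that stem necessarily contains $i$. Now suppose for contradiction that making $i$ a leader strictly increases the cover: then there is a disjoint stem set, including a new stem rooted at $i$, that covers more nodes than $\mathcal{S}^{max}$. But the new stem from $i$ covers only nodes in layers $l_k,l_{k+1},\dots$, all of which lie on the unique descending branch through $i$; these same nodes could already have been covered by extending the single descending stem of $\mathcal{S}^{max}$ through $i$, so no genuinely new node is gained. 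This yields the contradiction, exactly as in the proof of the \textit{if} direction of \textit{Theorem~\ref{thm1}}.

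I expect the main obstacle to be stating carefully \emph{why} the single-leader argument transfers without modification, since with multiple leaders the maximum disjoint stem set is a union of several stems rather than one. The safeguard is that the uniqueness $|\mathcal{V}_{l_k}|=1$ forces a bottleneck at layer $l_k$ regardless of how many leaders feed the upper layers: all downward flow through layer $l_k$ is funneled through the single node $i$, so the counting argument about coverage gain localizes entirely to the descending branch through $i$ and is insensitive to the number of leaders above. I would therefore phrase the proof as: \emph{The proof follows the same reasoning as the \textit{if} condition of \textit{Theorem~\ref{thm1}}, since the uniqueness of $i$ in $\mathcal{V}_{l_k}$ guarantees that any stem reaching layer $l_k$ must include $i$, and this bottleneck argument is independent of the number of leaders in $\mathcal{V}_{l_1}$.} This keeps the corollary's proof short while making explicit the one nontrivial point, namely the layer-$l_k$ bottleneck, that licenses reusing the earlier argument.
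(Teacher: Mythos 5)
Your high-level route---reducing the corollary to the \textit{if} direction of \textit{Theorem~\ref{thm1}} via the criterion of \textit{Theorem~\ref{thm_commault}}, and noting that only sufficiency is needed---is exactly the paper's: its entire proof of \textit{Corollary~\ref{col1}} is the remark that it follows ``using the same logic as the proof of the \textit{if} condition in \textit{Theorem~\ref{thm1}}.'' The genuine problem lies in the structural claim you single out as the crux: that every stem reaching layer $l_k$ or below must pass through the unique node $i\in\mathcal{V}_{l_k}$, hence that at most one stem can descend past $l_{k-1}$ and that all nodes below lie on ``the unique descending branch through $i$.'' This is false for $p$-layered DAGs as defined here. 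The labeling only guarantees that edges go from a layer to a \emph{strictly lower} layer, not to the adjacent one, and the paper explicitly states that edges ``may be connected between layers that are not adjacent to each other.'' A stem can therefore jump over the singleton layer without touching $i$, several disjoint stems can descend below $l_{k-1}$ simultaneously, and the sub-DAG below $i$ is in general not a single branch.

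Concretely, take $\mathcal{V}=\{1,\dots,6\}$ with the single leader $1$ and edges $(1,2)$, $(1,3)$, $(2,4)$, $(4,5)$, $(4,6)$, $(3,5)$. The labeling gives $\mathcal{V}_{l_1}=\{1\}$, $\mathcal{V}_{l_2}=\{2,3\}$, $\mathcal{V}_{l_3}=\{4\}$, $\mathcal{V}_{l_4}=\{5,6\}$, so $|\mathcal{V}_{l_3}|=1$; yet the stem $1\rightarrow3\rightarrow5$ reaches layer $l_4$ while avoiding node $4$, refuting your bottleneck claim. Worse, the gap is not merely expository: with the single leader the longest stem is $1\rightarrow2\rightarrow4\rightarrow5$ (four nodes), whereas after adding an input at $4$ the disjoint stems $(1\rightarrow3\rightarrow5)$ and $(4\rightarrow6)$ cover five nodes, so by \textit{Theorem~\ref{thm_commault}} node $4$ is \emph{not} a fixed node (one can verify directly that $e_4$ lies outside the controllable subspace for generic weights). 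So the failing step cannot be repaired by rewording; an argument of this shape needs an additional hypothesis---e.g., that every edge joins consecutive layers---under which your funneling argument, including its insensitivity to the number of leaders, does go through verbatim. Note that the same unflagged adjacency assumption sits inside the paper's own proof of \textit{Theorem~\ref{thm1}} (the assertion that ``any stem with length $k$ or greater always includes $i$''); your proposal's real service is to have made that hidden assumption explicit, but as written the key step fails.
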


With the above corollary, let us consider a state node $j\in\mathcal{V}_{l_k}$ with $|\mathcal{V}_{l_k}|=1$.
If there exists a stem from a leader to $j$, the state node $j$ is obviously a fixed node.
In this case, the subgraph of $\mathcal{G}(\mathcal{V},\mathcal{E})$ induced by the set of vertices $\bigcup_{i=k}^{p}\mathcal{V}_{l_i}$
can be interpreted as a subgraph with a single leader $j$.
In this case, for the subgraph induced by $\bigcup_{i=k}^{p}\mathcal{V}_{l_i}$, we can use the necessary and sufficient condition for a single leader provided in \textit{Theorem~\ref{thm1}}.
On the other hand, if there does not exist a stem from a leader to $j$, the subgraph induced by the set of vertices $\bigcup_{i=k}^{p}\mathcal{V}_{l_i}$ can also be interpreted as a subgraph without a leader.
As a result, there are only non-fixed nodes in $\bigcup_{i=k}^{p}\mathcal{V}_{l_i}$.
Now, consider the following sufficient condition for non-fixed nodes:

\begin{lemma}\label{lem1}
For a DAG $\mathcal{G}(\mathcal{V},\mathcal{E})$, let $\mathcal{G}_D(\mathcal{V}_D,\mathcal{E}_D)$ be an union graph of disjoint stems 
that cover the maximum number of state nodes in $\mathcal{G}$.
Then, the state node $k\in\mathcal{V}$ is not a fixed node if $k\in(\mathcal{V}\setminus\mathcal{V}_D)$.
\end{lemma}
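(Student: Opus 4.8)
The plan is to prove the lemma through the fixed-node characterization of \textit{Theorem~\ref{thm_commault}}: instead of arguing about $k$ directly, I will show that promoting $k$ to a leader (by attaching an additional input to it) strictly increases the generic dimension of the controllable subspace. By \textit{Theorem~\ref{thm_commault}} this is precisely the assertion that $k$ is not a fixed node, and by \textit{Theorem~\ref{thm_hosoe}} the generic dimension equals the maximum number of state nodes coverable by a disjoint set of stems. Hence the whole argument collapses to a short counting statement about disjoint stems.

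First I would record the baseline. By hypothesis $\mathcal{G}_D(\mathcal{V}_D,\mathcal{E}_D)$ is a union of disjoint stems covering the maximum number of state nodes in $\mathcal{G}$, so \textit{Theorem~\ref{thm_hosoe}} gives that the generic dimension of the controllable subspace of $\mathcal{G}$ equals $|\mathcal{V}_D|$. Next I would construct an explicit disjoint-stem set in the augmented graph $\mathcal{G}'$ obtained by attaching a new input to $k$. I keep every stem of $\mathcal{G}_D$ unchanged — each remains a valid stem in $\mathcal{G}'$ since its starting node is still a leader — and I adjoin the single node $k$ as a new (trivial) stem, which is legitimate because $k$ is now a leader. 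Since $k\in\mathcal{V}\setminus\mathcal{V}_D$, this new stem is vertex-disjoint from all stems of $\mathcal{G}_D$, so the resulting collection is a disjoint-stem set covering $\mathcal{V}_D\cup\{k\}$, i.e. $|\mathcal{V}_D|+1$ state nodes.

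From this I conclude that the maximum disjoint-stem coverage in $\mathcal{G}'$ is at least $|\mathcal{V}_D|+1$, strictly larger than $|\mathcal{V}_D|$. Applying \textit{Theorem~\ref{thm_hosoe}} to $\mathcal{G}'$, the generic dimension strictly increases when $k$ becomes a leader, and \textit{Theorem~\ref{thm_commault}} then yields that $k$ is not a fixed node, completing the argument. Note that the conclusion is robust to the choice of maximum cover: even if some other maximum disjoint-stem set happens to contain $k$, the generic dimension of $\mathcal{G}$ is still $|\mathcal{V}_D|$, so the strict increase after augmentation is unaffected.

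The one point that requires care — and where I expect a careful reader to push back — is the legitimacy of the single-vertex stem $\{k\}$, particularly if $k$ is a sink with no outgoing edges. Its disjointness from $\mathcal{G}_D$ is immediate from $k\notin\mathcal{V}_D$, so the genuine subtlety is conventional: one must confirm that a leader always contributes at least itself to the coverage count, consistent with the stem framework underlying \textit{Theorem~\ref{thm_hosoe}}, where each input is guaranteed a generic rank contribution of one for its own node. If one prefers to avoid the degenerate stem altogether, the same conclusion follows from the state-space side, since adjoining the column $e_k$ to $B_\Lambda$ raises the rank of the controllability matrix by one whenever $k$ lies outside the covered set. Once this convention is fixed, the counting step is unconditional and the lemma follows.
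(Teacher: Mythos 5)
Your proposal is correct and follows essentially the same route as the paper's own proof, which likewise invokes \textit{Theorem~\ref{thm_hosoe}} to identify $|\mathcal{V}_D|$ with the generic dimension and then observes that making $k\in\mathcal{V}\setminus\mathcal{V}_D$ a leader increases that dimension by at least one, so \textit{Theorem~\ref{thm_commault}} applies. Your explicit construction (retaining $\mathcal{G}_D$ and adjoining the trivial stem $\{k\}$, with the state-space fallback via the column $e_k$) simply fills in the counting step that the paper's two-sentence proof leaves implicit.
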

\begin{proof}
The proof is straightforward and based on the condition presented in \textit{Theorem~\ref{thm_commault}}. 
From \textit{Theorem~\ref{thm_hosoe}}, the number of state nodes in $\mathcal{V}_D$ is equal to the generic dimension of controllable subspace. 
Hence, if $k\in(\mathcal{V}\setminus\mathcal{V}_D)$ and becomes a leader, i.e., $k\in\mathcal{V}_{\mathcal{L}}$, the dimension of controllable subspace increases by at least one.
\end{proof}

\begin{example}
As an example of the above lemma, let us consider the graph $\mathcal{G}(\mathcal{V},\mathcal{E})$ in Fig.~\ref{fig1}(b).
In this case, the set of two disjoint stems that covers the maximum number of state nodes in $\mathcal{G}$ can be
$\{(1\rightarrow3\rightarrow5\rightarrow9),(2\rightarrow4\rightarrow7\rightarrow8)\}$, which is one of the possible sets of disjoint stems.
Thus, we obtain $\mathcal{V}_D=\{1,2,3,4,5,7,8,9\}$. 
From \textit{Lemma~\ref{lem1}}, the state node $6\in\mathcal{V}\setminus\mathcal{V}_D$ is not a fixed node.
Similarly, if the set of disjoint stems that covers the maximum number of state nodes is given by 
$\{(1\rightarrow3\rightarrow6),(2\rightarrow4\rightarrow7\rightarrow8\rightarrow9)\}$, we obtain $\mathcal{V}_D=\{1,2,3,4,6,7,8,9\}$.
Then, the state node $5\in\mathcal{V}\setminus\mathcal{V}_D$ is not a fixed node.
It is clear that whenever a non-fixed node becomes a leader, the dimension of controllable subspace always increases.
\end{example}

Here we introduce several terms for analyzing DAGs from each layer's perspective. 
Let us consider a $p$-layered DAG $\mathcal{G}(\mathcal{V},\mathcal{E})$ with the set of $m$-leaders $\mathcal{V}_{\mathcal{L}}$. 
For each layer $l_k,k\in\{1,...,p\}$, a \textit{maximum disjoint stems} of $\mathcal{V}_{l_k}$ is the set of $m$-disjoint stems that covers maximum number of state nodes in $\mathcal{V}_{l_k}$.
The maximum disjoint stems of $\mathcal{V}_{l_k}$ is denoted by $\mathcal{M}(\mathcal{V}_{l_k})$ for $k\in\{1,...,p\}$, which is not necessarily unique.
A state node $i\in\mathcal{V}_{l_k}$ is called a \textit{matched node} if there exists a stem in $\mathcal{M}(\mathcal{V}_{l_k})$ that starts from a leader to $i$.
Finally, in each layer, a set of matched nodes of $\mathcal{M}(\mathcal{V}_{l_k})$ is denoted by $\widehat{\mathcal{M}}(\mathcal{V}_{l_k})$,
which is determined by taking the intersection of $\mathcal{M}(\mathcal{V}_{l_k})$ and $\mathcal{V}_{l_k}$, 
i.e., 
\begin{equation*}
\widehat{\mathcal{M}}(\mathcal{V}_{l_k})=\mathcal{M}(\mathcal{V}_{l_k})\cap\mathcal{V}_{l_k}\,\, \text{for} \,\, k\in\{1,...,p\}.
\end{equation*}

An example of the terms defined above is given below.

\begin{example}
Let us consider the $4$-layered DAG shown in Fig.~\ref{fig1}(b) and suppose that $1,2\in\mathcal{V}_{\mathcal{L}}$.
In the first layer, the only feasible $\mathcal{M}(\mathcal{V}_{l_1})$ is $\{1,2\}$,
which are matched by themselves. Thus, the set of matched nodes in $\mathcal{V}_{l_1}$, i.e., $\widehat{\mathcal{M}}(\mathcal{V}_{l_1})$, is $\{1,2\}$.
Moving to the second layer, the set of disjoint stems that covers the maximum number of state nodes in $\mathcal{V}_{l_2}$,
i.e., $\mathcal{M}(\mathcal{V}_{l_2})$, is $\{(1\rightarrow3), (2\rightarrow4)\}$.
Hence, we obtain $\widehat{\mathcal{M}}(\mathcal{V}_{l_2})=\{3,4\}$.
Note that the set $\{4\}$ with the stem $1\rightarrow4$ cannot be a set of matched nodes, since this is not the \textit{maximum} set.
Now in the third layer, the number of feasible maximum disjoint stems of $\mathcal{V}_{l_3}$ is 2,
thus, $\mathcal{M}(\mathcal{V}_{l_3})$ is either
$\{(1\rightarrow3\rightarrow5), (2\rightarrow4\rightarrow7)\}$ or
$\{(1\rightarrow3\rightarrow6), (2\rightarrow4\rightarrow7)\}$.
Hence, $\widehat{\mathcal{M}}(\mathcal{V}_{l_3})$ can be either $\{5,7\}$ or $\{6,7\}$.
In the fourth layer, the feasible $\mathcal{M}(\mathcal{V}_{l_4})$ is either the stem $(1\rightarrow4\rightarrow7\rightarrow8)$ or $(2\rightarrow4\rightarrow7\rightarrow8)$,
it follows that $\widehat{\mathcal{M}}(\mathcal{V}_{l_4})$ is only $\{8\}$.
Similarly, in the fifth layer, we obtain $\widehat{\mathcal{M}}(\mathcal{V}_{l_5})={9}$.
\end{example}

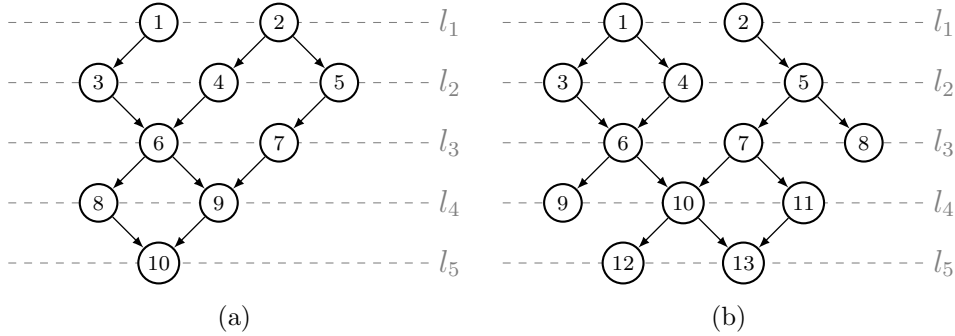
\begin{figure}[t]
\centering
\subfigure[]{
\begin{tikzpicture}[scale=0.8]
\draw[dashed, gray] (-1.5,4) -- (5.5,4) node[left, pos=1.1] { $l_1$};
\draw[dashed, gray] (-1.5,3) -- (5.5,3) node[left, pos=1.1] { $l_2$};
\draw[dashed, gray] (-1.5,2) -- (5.5,2) node[left, pos=1.1] { $l_3$};
\draw[dashed, gray] (-1.5,1) -- (5.5,1) node[left, pos=1.1] { $l_4$};
\draw[dashed, gray] (-1.5,0) -- (5.5,0) node[left, pos=1.1] { $l_5$};

\node[place, circle,minimum size=0.5cm] (node1) at (1,4) [] {\scriptsize$1$};
\node[place, circle,minimum size=0.5cm] (node2) at (3,4) [] {\scriptsize$2$};
\node[place, circle,minimum size=0.5cm] (node3) at (0,3) [] {\scriptsize$3$};
\node[place, circle,minimum size=0.5cm] (node4) at (2,3) [] {\scriptsize$4$};
\node[place, circle,minimum size=0.5cm] (node5) at (4,3) [] {\scriptsize$5$};
\node[place, circle,minimum size=0.5cm] (node6) at (1,2) [] {\scriptsize$6$};
\node[place, circle,minimum size=0.5cm] (node7) at (3,2) [] {\scriptsize$7$};
\node[place, circle,minimum size=0.5cm] (node8) at (0,1) [] {\scriptsize$8$};
\node[place, circle,minimum size=0.5cm] (node9) at (2,1) [] {\scriptsize$9$};
\node[place, circle,minimum size=0.5cm] (node10) at (1,0) [] {\scriptsize$10$};

\draw (node1) [-latex, line width=0.5pt] -- node [right]  {} (node3);
\draw (node2) [-latex, line width=0.5pt] -- node [right]  {} (node4);
\draw (node2) [-latex, line width=0.5pt] -- node [right]  {} (node5);
\draw (node3) [-latex, line width=0.5pt] -- node [right]  {} (node6);
\draw (node4) [-latex, line width=0.5pt] -- node [right]  {} (node6);
\draw (node5) [-latex, line width=0.5pt] -- node [right]  {} (node7);
\draw (node6) [-latex, line width=0.5pt] -- node [right]  {} (node8);
\draw (node6) [-latex, line width=0.5pt] -- node [right]  {} (node9);
\draw (node7) [-latex, line width=0.5pt] -- node [right]  {} (node9);
\draw (node8) [-latex, line width=0.5pt] -- node [right]  {} (node10);
\draw (node9) [-latex, line width=0.5pt] -- node [right]  {} (node10);

\end{tikzpicture}
}
\subfigure[]{
\begin{tikzpicture}[scale=0.8]

\draw[dashed, gray] (-1,4) -- (6,4) node[left, pos=1.1] { $l_1$};
\draw[dashed, gray] (-1,3) -- (6,3) node[left, pos=1.1] { $l_2$};
\draw[dashed, gray] (-1,2) -- (6,2) node[left, pos=1.1] { $l_3$};
\draw[dashed, gray] (-1,1) -- (6,1) node[left, pos=1.1] { $l_4$};
\draw[dashed, gray] (-1,0) -- (6,0) node[left, pos=1.1] { $l_5$};

\node[place, circle,minimum size=0.5cm] (node1) at (1,4) [] {\scriptsize$1$};
\node[place, circle,minimum size=0.5cm] (node2) at (3,4) [] {\scriptsize$2$};
\node[place, circle,minimum size=0.5cm] (node3) at (0,3) [] {\scriptsize$3$};
\node[place, circle,minimum size=0.5cm] (node4) at (2,3) [] {\scriptsize$4$};
\node[place, circle,minimum size=0.5cm] (node5) at (4,3) [] {\scriptsize$5$};
\node[place, circle,minimum size=0.5cm] (node6) at (1,2) [] {\scriptsize$6$};
\node[place, circle,minimum size=0.5cm] (node7) at (3,2) [] {\scriptsize$7$};
\node[place, circle,minimum size=0.5cm] (node8) at (5,2) [] {\scriptsize$8$};
\node[place, circle,minimum size=0.5cm] (node9) at (0,1) [] {\scriptsize$9$};
\node[place, circle,minimum size=0.5cm] (node10) at (2,1) [] {\scriptsize$10$};
\node[place, circle,minimum size=0.5cm] (node11) at (4,1) [] {\scriptsize$11$};
\node[place, circle,minimum size=0.5cm] (node12) at (1,0) [] {\scriptsize$12$};
\node[place, circle,minimum size=0.5cm] (node13) at (3,0) [] {\scriptsize$13$};

\draw (node1) [-latex, line width=0.5pt] -- node [right]  {} (node3);
\draw (node1) [-latex, line width=0.5pt] -- node [right]  {} (node4);
\draw (node2) [-latex, line width=0.5pt] -- node [right]  {} (node5);
\draw (node3) [-latex, line width=0.5pt] -- node [right]  {} (node6);
\draw (node4) [-latex, line width=0.5pt] -- node [right]  {} (node6);
\draw (node5) [-latex, line width=0.5pt] -- node [right]  {} (node7);
\draw (node5) [-latex, line width=0.5pt] -- node [right]  {} (node8);
\draw (node6) [-latex, line width=0.5pt] -- node [right]  {} (node9);
\draw (node6) [-latex, line width=0.5pt] -- node [right]  {} (node10);
\draw (node7) [-latex, line width=0.5pt] -- node [right]  {} (node10);
\draw (node7) [-latex, line width=0.5pt] -- node [right]  {} (node11);
\draw (node10) [-latex, line width=0.5pt] -- node [right]  {} (node12);
\draw (node10) [-latex, line width=0.5pt] -- node [right]  {} (node13);
\draw (node11) [-latex, line width=0.5pt] -- node [right]  {} (node13);

\end{tikzpicture}
}
\caption{Examples of $5$-layered DAGs. (a) The set of fixed nodes is $\{1,2,3,6,7,8,9,10\}$ with leaders $1,2\in\mathcal{V}_{\mathcal{L}}$.
(b) The set of fixed nodes is $\{1,2,5,6,12,13\}$ with leaders $1,2\in\mathcal{V}_{\mathcal{L}}$.}
\label{fig2}
\end{figure}

Consider the following sufficient condition of fixed nodes.

\begin{theorem}\label{thm2}
For a $p$-layered DAG $\mathcal{G}(\mathcal{V},\mathcal{E})$ with the set of $m$-leaders $\mathcal{V}_{\mathcal{L}}$, 
the state nodes in $\mathcal{V}_{l_k}$ are fixed nodes if 
there exists exactly one feasible set of matched nodes in $\mathcal{V}_{l_k}$ for $k\in\{1,...,p\}$.
\end{theorem}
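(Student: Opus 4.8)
The plan is to reduce the statement, via \textit{Theorem~\ref{thm_commault}} and \textit{Theorem~\ref{thm_hosoe}}, to a pure statement about maximum disjoint stems: since the generic dimension equals the maximum number of state nodes covered by a disjoint set of stems, it suffices to show that for every $i\in\mathcal{V}_{l_k}$, promoting $i$ to a leader with an additional input leaves this maximum unchanged. First I would unpack the hypothesis. ``Exactly one feasible set of matched nodes in $\mathcal{V}_{l_k}$'' means that $\widehat{\mathcal{M}}(\mathcal{V}_{l_k})$ is the \emph{same} set for every admissible choice of maximum disjoint stems $\mathcal{M}(\mathcal{V}_{l_k})$; write $M^{\star}$ for this common set and let $N$ denote the generic dimension of $\mathcal{G}$.

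The first key step is to show that uniqueness forces $M^{\star}=\mathcal{V}_{l_k}$, i.e.\ \emph{every} node of the layer is matched. I would argue by contradiction: suppose some $u\in\mathcal{V}_{l_k}\setminus M^{\star}$ is unmatched. Because $\mathcal{G}$ is influenceable, there is a stem from a leader to $u$; overlaying this stem on a configuration realizing $M^{\star}$ produces an alternating (augmenting-type) route. Since $|M^{\star}|$ is already the maximum number of coverable nodes in $\mathcal{V}_{l_k}$, this route cannot raise that count, so it must instead reroute the stems into a second maximum configuration that matches $u$ in place of some node of $M^{\star}$. The matched set of that configuration then differs from $M^{\star}$, contradicting uniqueness. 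Hence the $m$ disjoint stems of $\mathcal{M}(\mathcal{V}_{l_k})$ simultaneously cover all of $\mathcal{V}_{l_k}$.

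With every node of $\mathcal{V}_{l_k}$ lying on a common family of maximum disjoint stems, I would finish by an exchange argument in the spirit of the \textit{if} direction of \textit{Theorem~\ref{thm1}} and \textit{Corollary~\ref{col1}}. Fix $i\in\mathcal{V}_{l_k}$ and suppose, for contradiction, that adding $i$ as an $(m{+}1)$-th leader covers one extra node, yielding $m{+}1$ disjoint stems that cover $N+1$ nodes. Let $T_i$ be the stem rooted at the new leader $i$, and let $S$ be the stem of the full-cover configuration that passes through $i$; splicing the leader-to-$i$ prefix of $S$ onto $T_i$ gives a single stem from an \emph{original} leader that descends exactly as $T_i$ does. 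Re-inserting this spliced stem and untangling the resulting vertex conflicts with the remaining stems by a disjoint-paths exchange should produce $m$ disjoint stems, using the original leaders only, that still cover $N+1$ nodes, contradicting the maximality of $N$. By \textit{Theorem~\ref{thm_commault}}, $i$ is a fixed node, and since $i$ was arbitrary, all nodes of $\mathcal{V}_{l_k}$ are fixed.

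The hard part will be this final exchange. Because the edges of a $p$-layered DAG may join non-adjacent layers, the spliced stem can collide with several of the remaining stems at once, and one must show the collisions can always be resolved without sacrificing a covered node; equivalently, that the new stem rooted at $i$ provides no downstream reachability that the saturated family of stems already passing through the fully matched layer $l_k$ does not provide. I expect to control this through a Menger-type augmenting-path analysis of the disjoint stems crossing $l_k$, using the fact established in the second step that $l_k$ is entirely saturated, together with \textit{Lemma~\ref{lem1}}, which rules out a ``free'' node off the stems at which such an augmentation could terminate.
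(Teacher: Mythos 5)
Your skeleton matches the paper's: both arguments reduce the claim via \textit{Theorem~\ref{thm_commault}} (together with \textit{Theorem~\ref{thm_hosoe}}) to showing that promoting a node of $\mathcal{V}_{l_k}$ to a leader cannot enlarge the maximum number of nodes covered by disjoint stems, and both close by absorbing the new stem into stems rooted at the original leaders. Two differences are worth recording. First, your intermediate step --- uniqueness of the matched set forces $M^{\star}=\mathcal{V}_{l_k}$ --- does not appear in the paper, and it is a genuine improvement: the paper's proof only ever concludes that the \emph{matched} nodes of the layer are fixed, while the theorem as worded asserts fixedness for all state nodes of $\mathcal{V}_{l_k}$, so your exchange argument (in matroid language: the coverable subsets of $\mathcal{V}_{l_k}$ form a gammoid, influenceability excludes loops, and a loop-free matroid has a unique basis only if the basis is the whole ground set) is exactly what squares the statement with the proof. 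Second, where you splice the leader-to-$i$ prefix of a stem through $i$ onto $T_i$ and must then untangle vertex collisions, the paper instead leans directly on the hypothesis: since the matched set is unique, it asserts that every stem descending past the layer threads the matched nodes one-to-one, so any stem newly generated by a matched node $w$ ``can always be included in one of the feasible stems starting at the existing leaders'' --- a one-sentence absorption rather than a general disjoint-paths exchange.

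The one real gap is the step you flag yourself: the Menger-type untangling is announced (``I expect to control this\ldots'') but never carried out, and it is not automatic, because your spliced stem is built from a layer-$l_k$-saturating configuration while the $m+1$ stems covering $N+1$ nodes form a \emph{global} configuration; nothing you write guarantees that the rerouting can preserve all $N+1$ covered nodes, and \textit{Lemma~\ref{lem1}} only constrains where an augmentation can end --- it does not perform the exchange. To be fair, the paper is no more detailed at the corresponding point: its absorption sentence silently assumes the path from the original leader to $w$ can be made disjoint from the remaining stems above the layer, which is precisely the collision you identify, and it also does not address stems that bypass the layer through edges joining non-adjacent layers. So as a plan your proposal is sound, and in your second step sharper than the published argument; as a proof it is incomplete at exactly the exchange step. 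To finish, either formalize the linkage/gammoid exchange, or argue as the paper implicitly does: uniqueness pins each leader's stem to a distinct matched node of $\mathcal{V}_{l_k}$, so the downstream continuation supplied by the new leader $i$ is reachability the saturated threading already provides, making the substitution forced rather than constructed.
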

\begin{proof}
For a $p$-layered DAG $\mathcal{G}(\mathcal{V},\mathcal{E})$ with $m$-leaders, 
let us suppose that there exists exactly one feasible set of matched nodes in a layer $\mathcal{V}_{l_j}$, where $j \in \{1,..., p\}$.
Now, let us consider stems in $\mathcal{G}$ that start at a leader in $\mathcal{V}_{\mathcal{L}}$ to a state node in $\mathcal{V}_{l_k},j < k \leq p$. 
Since there is only one feasible set of matched nodes in layer $\mathcal{V}_{l_j}$, 
every stem must include a state node in $\mathcal{V}_{l_j}$ that is matched by exactly one leader at the same time (one-to-one). 
This follows from the assumption that there is only one feasible set of matched nodes in $\mathcal{V}_{l_j}$.
Next, let us suppose that an additional input is added to a matched node $w \in \mathcal{V}_{l_j}$, i.e., $w \in \mathcal{V}_{\mathcal{L}}$. 
In this case, any newly generated stems by $w$ 
can always be included in one of the feasible stems starting at the existing leaders in $\mathcal{V}_{\mathcal{L}} \setminus \{w\}$. 
Therefore, even if a matched node in $\mathcal{V}_{l_k}$ becomes a leader, 
the maximum number of state nodes covered by disjoint stems does not increase.
From \textit{Theorem~\ref{thm_commault}}, it follows that the matched nodes in $\mathcal{V}_{l_k}$ are fixed nodes.
\end{proof}

\begin{example}
Let us consider the graph shown in Fig.~\ref{fig2}(a) and suppose that $1,2\in\mathcal{V}_{\mathcal{L}}$.
In this case, there exists only one feasible set of matched nodes in $\mathcal{V}_{l_1}$, $\mathcal{V}_{l_3}$, $\mathcal{V}_{l_4}$, and $\mathcal{V}_{l_5}$.
Specifically, in the second layer, the feasible set of matched nodes in $\mathcal{V}_{l_2}$ can either be $\{3,4\}$ or $\{3,5\}$ 
with the sets of disjoint stems $\{(1\rightarrow3),(2\rightarrow4)\}$ and $\{(1\rightarrow3),(2\rightarrow5)\}$, respectively.
In this case, there may exist a state node in $\mathcal{V}_{l_2}$ that is not a fixed node.
On the other hand, in the third layer, every stem with a length of $3$ or more must include one of the state nodes $6,7\in\mathcal{V}_{l_3}$.
Therefore, even if a state node in $\mathcal{V}_{l_3}$ becomes a leader, it is clear that the maximum number of state nodes that can be covered by disjoint stems cannot increase.
The same logic can be applied to $\mathcal{V}_{l_1}$, $\mathcal{V}_{l_4}$, and $\mathcal{V}_{l_5}$.
\end{example}

For a more detailed analysis based on the maximum disjoint stems of each layer,
let all possible feasible maximum disjoint stems of $\mathcal{V}_{l_k}$ be denoted as $\mathcal{M}_{j}(\mathcal{V}_{l_k})$ for $k\in\{1,...,p\}$ and $j\in\{1,...,n_k\}$,
where $n_k$ is the number of feasible maximum disjoint stems of $\mathcal{V}_{l_k}$.
Similarly, the set of matched nodes is denoted as $\widehat{\mathcal{M}}_{j}(\mathcal{V}_{l_k})$,
which is defined by the intersection of the set of maximum disjoint stems and state nodes in each layer, 
i.e., 
\begin{equation*}
\widehat{\mathcal{M}}_{j}(\mathcal{V}_{l_k})=\mathcal{M}_{j}(\mathcal{V}_{l_k})\cap\mathcal{V}_{l_k}\,\, \text{for} \,\, k\in\{1,...,p\}.
\end{equation*}

With the above terms, the following theorem provides the necessary and sufficient condition for fixed nodes in DAGs.

\begin{theorem}\label{thm3}
For a $p$-layered DAG $\mathcal{G}(\mathcal{V},\mathcal{E})$ with the set of $m$-leaders $\mathcal{V}_{\mathcal{L}}$, 
the state node $i\in \mathcal{V}_{l_k}$ is fixed node if and only if
$i\in\bigcap_{j=1}^{n_k}\widehat{\mathcal{M}}_j(\mathcal{V}_{l_k})$ for $k\in\{1,...,p\}$.
\end{theorem}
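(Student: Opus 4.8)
The plan is to route everything through \textit{Theorem~\ref{thm_commault}} and \textit{Theorem~\ref{thm_hosoe}}: the node $i\in\mathcal{V}_{l_k}$ is fixed if and only if supplying $i$ with its own input leaves the generic dimension, i.e.\ the maximum number of state nodes covered by $m$ disjoint stems, unchanged. I would therefore prove the two implications separately, in each case converting the combinatorial condition $i\in\bigcap_{j=1}^{n_k}\widehat{\mathcal{M}}_j(\mathcal{V}_{l_k})$ into a statement about whether a new stem rooted at $i$ can enlarge the covered set. Two elementary facts drive the argument: every stem meets each layer in at most one state node, so a family of $m$ disjoint stems covers at most $|\widehat{\mathcal{M}}_j(\mathcal{V}_{l_k})|$ nodes of $\mathcal{V}_{l_k}$; and, by the labeling rule, every node of $\mathcal{V}_{l_k}$ possesses an in-neighbour in $\mathcal{V}_{l_{k-1}}$, so any stem reaching $\mathcal{V}_{l_k}$ passes through exactly one matched node of that layer.

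For sufficiency I would argue by contraposition in the spirit of \textit{Theorem~\ref{thm2}}. Assume adding an input at $i$ raises the generic dimension; by \textit{Theorem~\ref{thm_hosoe}} there is a family of $m+1$ disjoint stems, one of them a new stem $T_i$ rooted at $i$, covering one more state node than before. Deleting $T_i$ leaves $m$ disjoint stems that avoid $i$ (by disjointness) and whose layer-$l_k$ trace is a set of matched nodes not containing $i$. The remaining task is to upgrade these $m$ stems, without ever touching $i$, into a genuine maximum disjoint stems of $\mathcal{V}_{l_k}$; the resulting configuration would be some $\widehat{\mathcal{M}}_{j}(\mathcal{V}_{l_k})$ with $i\notin\widehat{\mathcal{M}}_{j}(\mathcal{V}_{l_k})$, contradicting $i\in\bigcap_{j=1}^{n_k}\widehat{\mathcal{M}}_j(\mathcal{V}_{l_k})$.

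For necessity I would start from a feasible $\mathcal{M}_{j_0}(\mathcal{V}_{l_k})$ with $i\notin\widehat{\mathcal{M}}_{j_0}(\mathcal{V}_{l_k})$, which exists precisely when $i\notin\bigcap_{j=1}^{n_k}\widehat{\mathcal{M}}_j(\mathcal{V}_{l_k})$. Since $i$ lies on no stem of $\mathcal{M}_{j_0}(\mathcal{V}_{l_k})$, rooting a fresh stem at $i$ (of length one if necessary) produces $m+1$ pairwise disjoint stems that cover $i$ on top of everything already covered by $\mathcal{M}_{j_0}(\mathcal{V}_{l_k})$. Feeding this back through \textit{Theorem~\ref{thm_hosoe}} and \textit{Theorem~\ref{thm_commault}} shows the generic dimension strictly increases, so $i$ is not fixed.

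The hard part, and the place where the layered structure must really be used, is the bridge between the \emph{local} optimum $\mathcal{M}_{j_0}(\mathcal{V}_{l_k})$, which maximizes coverage of the single layer $l_k$ only, and the \emph{global} generic dimension that governs fixedness through \textit{Theorem~\ref{thm_commault}}. In both directions I silently assumed that a layer-$l_k$ maximum configuration avoiding (resp.\ forcing) $i$ can be completed to one that is simultaneously optimal over the lower layers $\bigcup_{\ell>k}\mathcal{V}_{l_\ell}$, so that attaching $\{i\}$ contributes exactly one extra covered node. I would discharge this with an alternating/augmenting-stem exchange: given any $m$ disjoint stems, reroute them along an alternating sequence of covered and uncovered nodes so as to raise the layer-$l_k$ count to its maximum while never lowering the total coverage and never introducing $i$. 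Establishing that such an exchange always exists---equivalently, that the matched set of layer $l_k$ can be selected independently of the coverage downstream---is the crux, and I expect the reduction to the $m$-vertex-disjoint-path problem announced in the introduction, together with \textit{Lemma~\ref{lem1}}, to be the natural vehicle for making it rigorous.
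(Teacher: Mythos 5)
Your proposal takes essentially the same route as the paper's proof: both directions are driven by \textit{Theorem~\ref{thm_commault}} via the disjoint-stem characterization of \textit{Theorem~\ref{thm_hosoe}}, with sufficiency argued by absorbing any stem newly rooted at $i$ into an existing maximum family (the paper's \emph{if} direction), and necessity argued by adjoining a stem at $i$ to a feasible $\mathcal{M}_{j_0}(\mathcal{V}_{l_k})$ that avoids $i$ and concluding that the covered count grows (the paper's \emph{only if} direction). The local-to-global bridge you isolate as the crux --- upgrading a layer-$l_k$-optimal family, without touching $i$, to one that is simultaneously optimal for the global generic dimension, so that adding $i$'s stem changes that dimension by exactly the layer-level count --- is precisely the step the paper's proof also leaves implicit (it asserts the absorption and the increase ``by at least one'' with no augmenting-exchange or $m$-VDPP argument), so your sketch is faithful to, and if anything more candid than, the published proof.
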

\begin{proof}
For the \textit{if} condition, let us suppose a state node $i \in \mathcal{V}_{l_k}$ satisfies $i \in \bigcap_{j=1}^{n_k} \widehat{\mathcal{M}}_j(\mathcal{V}_{l_k})$ for $k \in \{1,...,p\}$. 
Then, $i$ is a matched node for all $\widehat{\mathcal{M}}_j(\mathcal{V}_{l_k})$, where $j \in \{1,...,n_k\}$. It follows that $i$ is in the intersection of all matched nodes. 
Consequently, even if $i$ becomes a leader, the state nodes that can be covered by the stem generated by $i$ 
can always be included in the original set of maximum disjoint stems by the existing leaders in $\mathcal{V}_{\mathcal{L}}$. 
Therefore, according to \textit{Theorem \ref{thm_commault}}, $i$ is a fixed node.
For the \textit{only if} condition, let us suppose $i$ is a fixed node but $i \notin \bigcap_{j=1}^{n_k} \widehat{\mathcal{M}}_j(\mathcal{V}_{l_k})$ for some $k \in \{1,...,p\}$. 
This implies there exists a set of matched nodes of $\mathcal{V}_{l_k}$ where $i$ is not matched. 
It follows that there exists at least one set of maximum disjoint stems of $\mathcal{V}_{l_k}$ that does not include $i$. 
In this case, if $i$ becomes a leader, it is clear that the maximum number of state nodes in $\mathcal{V}_{l_k}$ covered by disjoint stems should increase by at least one. 
However, this contradicts the assumption that $i$ is a fixed node according to \textit{Theorem \ref{thm_commault}}. 
Therefore, $i$ must be in the intersection of all matched nodes in $\mathcal{V}_{l_k}$ for $k \in \{1,...,p\}$.
\end{proof}

\begin{remark}
\textcolor{black}{
The authors in \cite{van2019dynamic} introduce the concept of the \textit{supremal minimal separator} to propose conditions for identifying non-fixed nodes. 
While they establish necessary conditions for fixed nodes, as detailed in \textit{Conjecture~1} of their paper, they do not provide a necessary and sufficient condition for fixed nodes. 
On the other hand, for DAGs, \textit{Theorem~\ref{thm3}} provides a necessary and sufficient condition for determining fixed nodes within $\mathcal{V}_{l_k}$ for each $k \in \{1, \ldots, p\}$. 
This distinction underscores that \textit{Theorem~\ref{thm3}} offers a significant contribution beyond the conditions presented in \cite{van2019dynamic} within DAGs.
}
\end{remark}

\textit{Theorem~\ref{thm3}} is crucial as it allows us to determine the fixed nodes in a structured network with a set of leaders.
Note that only a subgraph in $\mathcal{G}$ induced by the set of vertices $\bigcup_{w=1}^{k}\mathcal{V}_{l_w}$ is required to find the fixed nodes in $\mathcal{V}_{l_k}$.
This means that to find the fixed nodes in a specific layer, we do not need to consider the entire graph but only a smaller, more manageable subgraph.
Naturally, if $k = p$, the induced graph by the set of vertices $\bigcup_{w=1}^{p}\mathcal{V}_{l_w}$ becomes identical to the original graph $\mathcal{G}$. 
The time complexity of this theorem will be discussed later.
From the proof of \textit{Theorem~\ref{thm3}}, we can easily obtain the following sufficient condition for fixed nodes:

\begin{corollary}\label{cor2}
For a $p$-layered DAG $\mathcal{G}(\mathcal{V},\mathcal{E})$ with the set of $m$-leaders $\mathcal{V}_{\mathcal{L}}$, a state node $i\in \mathcal{V}_{l_k}$ is a fixed node if there exists exactly one incoming edge $(j,i)\in\mathcal{E}$ from a fixed node $j\in\mathcal{V}_{l_{k-1}}$, where $k\in\{2,...,p\}$.
\end{corollary}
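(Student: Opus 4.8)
The plan is to verify the matched-node characterization of \emph{Theorem~\ref{thm3}} directly. By that theorem it suffices to prove that $i\in\bigcap_{l=1}^{n_k}\widehat{\mathcal{M}}_l(\mathcal{V}_{l_k})$, i.e. that $i$ is a matched node in \emph{every} maximum disjoint stems of $\mathcal{V}_{l_k}$. The hypothesis supplies exactly two structural facts I would exploit. First, since $j\in\mathcal{V}_{l_{k-1}}$ is a fixed node, applying \emph{Theorem~\ref{thm3}} one layer higher gives $j\in\bigcap_{l=1}^{n_{k-1}}\widehat{\mathcal{M}}_l(\mathcal{V}_{l_{k-1}})$, so in every maximum disjoint stems of $\mathcal{V}_{l_{k-1}}$ some stem from a leader reaches $j$. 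Second, because $(j,i)$ is the \emph{only} edge entering $i$, every stem covering $i$ must pass through $j$, and conversely any stem that reaches $j$ can be prolonged to $i$ by appending the edge $(j,i)$ whenever the vertex $i$ is still free.

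Next I would fix an arbitrary maximum disjoint stems $\mathcal{M}(\mathcal{V}_{l_k})$ and argue by contradiction that $i$ is matched. Assuming $i$ is unmatched, I would restrict the stems of $\mathcal{M}(\mathcal{V}_{l_k})$ to the subgraph induced by $\bigcup_{w=1}^{k-1}\mathcal{V}_{l_w}$ and compare the resulting layer-$(k-1)$ coverage with a maximum disjoint stems of $\mathcal{V}_{l_{k-1}}$, in which $j$ is guaranteed to be matched by the first fact above. Appending $(j,i)$ to the stem that reaches $j$ would then yield a disjoint-stems configuration covering one more node of $\mathcal{V}_{l_k}$ than $\mathcal{M}(\mathcal{V}_{l_k})$, contradicting the maximality of $\mathcal{M}(\mathcal{V}_{l_k})$. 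Having shown $i$ lies in every maximum configuration, \emph{Theorem~\ref{thm3}} immediately gives that $i$ is a fixed node.

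The hard part will be the append/exchange step. Appending the edge $(j,i)$ preserves vertex-disjointness only if the stem reaching $j$ does not already continue to some other successor of $j$ and if the place of $i$ is genuinely free; so the real content is to show that the in-degree-one condition on $i$ forces \emph{every} maximum configuration through $i$, not merely that \emph{some} maximum configuration can be routed through it. I expect the crux to be ruling out a competing maximum configuration that preserves layer-$k$ coverage while sending $j$'s stem to a different successor and thereby bypassing $i$; handling this cleanly likely requires a careful augmenting-path argument on the truncated configuration, together with a check that edges skipping layer $l_{k-1}$ directly into other nodes of $\mathcal{V}_{l_k}$ cannot create such a bypass, rather than the one-line extension that the phrase ``easily obtain'' might suggest.
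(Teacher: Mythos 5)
You were right to be suspicious of the ``easily obtain'' phrasing, and in fact the paper offers no proof at all for this corollary --- it is stated with only the remark that it follows from the proof of \textit{Theorem~\ref{thm3}} --- so there is nothing to reconcile your argument with; the question is whether your plan can work, and it cannot. The crux you flagged in your last paragraph (a competing maximum configuration that sends $j$'s stem to a different successor, bypassing $i$) is not merely a technical difficulty: it is exactly what happens, and it makes the statement false as literally written. The paper's own Fig.~\ref{fig1}(a) is a counterexample. There, node $2\in\mathcal{V}_{l_2}$ is a fixed node (the paper's fixed set is $\{1,2,7\}$), and node $3\in\mathcal{V}_{l_3}$ has exactly one incoming edge, namely $(2,3)$, from that fixed node --- so the hypothesis of the corollary holds with $k=3$. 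Yet node $3$ is not fixed: $|\mathcal{V}_{l_3}|=2$, so \textit{Theorem~\ref{thm1}} excludes it, and indeed the maximum stem $(1\rightarrow2\rightarrow4\rightarrow5\rightarrow7)$ is a maximum configuration in which $3$ is unmatched.

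Seen through your proposed argument, the failure point is the append/exchange step: when the stem reaching $j$ in the maximum configuration $\mathcal{M}(\mathcal{V}_{l_k})$ already continues to a different successor $i'$ of $j$ that covers a node of $\mathcal{V}_{l_k}$ (here $2\rightarrow4$), rerouting that stem through $(j,i)$ trades one covered layer-$k$ node for another, so the coverage count does not increase and no contradiction with maximality arises; consequently $i$ genuinely fails to lie in $\bigcap_{j=1}^{n_k}\widehat{\mathcal{M}}_j(\mathcal{V}_{l_k})$, and by \textit{Theorem~\ref{thm3}} it is not fixed. No augmenting-path refinement can rescue this, because the obstruction is a true counterexample, not a gap in bookkeeping. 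For the corollary to have a chance, the hypothesis must be strengthened so that no competing successor exists --- e.g., requiring in addition that $(j,i)$ be the \emph{unique outgoing} edge of $j$ --- and even then your truncation step needs care, since stems in a maximum configuration for $\mathcal{V}_{l_k}$ may skip layer $l_{k-1}$ entirely via edges between non-adjacent layers, so their restriction to $\bigcup_{w=1}^{k-1}\mathcal{V}_{l_w}$ need not be a maximum configuration for $\mathcal{V}_{l_{k-1}}$ in which $j$ is matched.
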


\begin{example}
Let us consider the graph $\mathcal{G}(\mathcal{V},\mathcal{E})$ shown in Fig.~\ref{fig2}(b) and suppose that $1,2\in\mathcal{V}_{\mathcal{L}}$.
This example demonstrates the application of \textit{Theorem~\ref{thm3}} in identifying fixed nodes in a $5$-layered DAG. 
By analyzing each layer of the graph, we can determine the fixed nodes based on the set of matched nodes.
In the first layer, the leaders $1$ and $2$ are inherently fixed nodes, as they each have their own external input.
In the second layer, the feasible sets of matched nodes are:
\begin{align*}
\widehat{\mathcal{M}}_{1}(\mathcal{V}_{l_2})&=\{3,5\},\\
\widehat{\mathcal{M}}_{2}(\mathcal{V}_{l_2})&=\{4,5\},
\end{align*}
where $n_2=2$. Thus, we have $\bigcap_{j=1}^{2}\widehat{\mathcal{M}}_j(\mathcal{V}_{l_2})=\{5\}$.
From \textit{Theorem~\ref{thm3}}, the state node $5\in\mathcal{V}_{l_2}$ is a fixed node.
Similarly, in the third layer, the feasible sets of matched nodes are:
\begin{align*}
\widehat{\mathcal{M}}_{1}(\mathcal{V}_{l_3})=\{6,7\},\\
\widehat{\mathcal{M}}_{2}(\mathcal{V}_{l_3})=\{6,8\},
\end{align*}
where $n_3=2$. From $\bigcap_{j=1}^{2}\widehat{\mathcal{M}}_j(\mathcal{V}_{l_3})=\{6\}$, the state node $6\in\mathcal{V}_{l_3}$ is a fixed node in this case.
On the other hand, in the fourth layer, the feasible sets of matched nodes are: 
\begin{align*}
\widehat{\mathcal{M}}_{1}(\mathcal{V}_{l_4})&=\{9,10\},\\
\widehat{\mathcal{M}}_{2}(\mathcal{V}_{l_4})&=\{10,11\},\\
\widehat{\mathcal{M}}_{3}(\mathcal{V}_{l_4})&=\{9,11\},
\end{align*}
where $n_4=3$. In this case, the intersection of all feasible sets of matched nodes is an empty set, which indicates that there are no fixed nodes in $\mathcal{V}_{l_4}$
In the last layer, the feasible set of matched nodes is only $\{12,13\}$, thus, the state nodes $12,13\in\mathcal{V}_{l_5}$ are the fixed nodes.
This shows that the fixed nodes can be easily identified when there is only one feasible set of matched nodes as introduced in \textit{Theorem~\ref{thm2}}.
Finally, we can obtain the set of fixed nodes in $\mathcal{G}(\mathcal{V},\mathcal{E})$ as $\{1,2,5,6,12,13\}$. 
This example illustrates the practical application of \textit{Theorem~\ref{thm3}} in a given network structure and provides a step-by-step approach to identifying fixed nodes.
\end{example}

\section{\textcolor{black}{Algorithm for Searching Fixed Nodes}}\label{sec_alg}
This section presents an efficient algorithm that simultaneously performs labeling and fixed node search for general DAGs.
In particular, \textit{Theorem~\ref{thm3}} shares similarities with the recursive labeling algorithm proposed in \cite{yan2010comparing,liu2012control}. 
The recursive labeling algorithm removes state nodes without incoming edges and labels them sequentially, from the top layer to the bottom layer, until all state nodes have been labeled. 
By repeating the algorithm $p$-times, the set of state nodes in each layer, denoted by $\mathcal{V}_{l_k}$ for $k\in\{1,...,p\}$, can be obtained. 
At each iteration $k$, a subgraph, denoted by $\mathcal{G}_{l_k}$ induced by the set $\bigcup_{w=1}^{k}\mathcal{V}_{l_w}$, can be constructed. 
Note that the induced subgraph $\mathcal{G}_{l_p}$ becomes the original graph $\mathcal{G}$ when $k=p$.
Similarly in \textit{Theorem~\ref{thm3}}, to determine whether a state node $i\in\mathcal{V}_{l_k}$ satisfies the condition of a fixed node, 
we only need to consider the induced subgraph $\mathcal{G}_{l_k}$ for finding $\widehat{\mathcal{M}}_j(\mathcal{V}_{l_k})$ for $j\in\{1,...,n_k\}$. 
Hence, by applying \textit{Theorem~\ref{thm3}} sequentially from the top layer to the bottom layer, 
the fixed node search and the recursive labeling algorithm can be performed simultaneously, resulting in a significant reduction in time complexity. 
This advantage is particularly useful for complex DAGs. 
The aforementioned process can be represented by \textit{Algorithm~\ref{alg}}, which applies to DAGs represented as $p$-layered hierarchical structure.

\begin{algorithm}[t]
\caption{Labeling-based fixed node search for DAGs}\label{alg}
\begin{algorithmic}[1]
\Procedure{FixedNodes}{a DAG $\mathcal{G}(\mathcal{V},\mathcal{E}),\mathcal{V}_{\mathcal{L}}$}
\State $k\gets1$ \Comment{Initialize the layer index}
\State $\mathcal{F}(\mathcal{V})\gets\emptyset$ \Comment{Initialize the set of fixed nodes}
\State $\overline{\mathcal{V}}\gets\mathcal{V}$ \Comment{For labeling}
\State $\overline{\mathcal{E}}\gets$ \textcolor{black}{Find edges from external inputs to each leader in $\mathcal{V}_{\mathcal{L}}$}
\State $\mathcal{E}\gets\mathcal{E}\setminus\overline{\mathcal{E}}$
\While{$\overline{\mathcal{V}}\neq\emptyset$}
\State $\mathcal{V}_{l_k}\gets i\in\overline{\mathcal{V}}$ if $\mathcal{N}_i\cap\overline{\mathcal{V}}=\emptyset$
\State Construct a graph $\mathcal{G}_{l_k}$ induced by $\bigcup_{w=1}^k \mathcal{V}_{l_k}$ 
\State Find feasible $\mathcal{M}_{j}(\mathcal{V}_{l_k})$ in $\mathcal{G}_{l_k}$ for $j\in\{1,...,n_k\}$
\For{$j\gets 1$ to $n_k$}
\State $\widehat{\mathcal{M}}_{j}(\mathcal{V}_{l_k})\gets\mathcal{M}_{j}(\mathcal{V}_{l_k})\cap\mathcal{V}_{l_k}$
\EndFor
\State $\mathcal{F}(\mathcal{V}_{l_k})\gets\bigcap_{j=1}^{n_k}\widehat{\mathcal{M}}_j(\mathcal{V}_{l_k})$
\State $\mathcal{F}(\mathcal{V})\gets\mathcal{F}(\mathcal{V})\cup\mathcal{F}(\mathcal{V}_{l_k})$
\State $\overline{\mathcal{V}}\gets\overline{\mathcal{V}}\setminus\mathcal{V}_{l_k}$
\State $k\gets k+1$
\EndWhile
\State $p\gets k$
\State \textbf{return} The set of fixed nodes $\mathcal{F}(\mathcal{V})$ in $\mathcal{G}(\mathcal{V},\mathcal{E})$
\EndProcedure \setcounter{ALG@line}{0}\newline
\end{algorithmic}
\end{algorithm}

\subsection{Computational complexity}
Now, we will discuss the time complexity of \textit{Theorem~\ref{thm3}} and \textit{Algorithm~\ref{alg}}. 
For a $p$-layered DAG $\mathcal{G}(\mathcal{V},\mathcal{E})$ with $\mathcal{V}_{\mathcal{L}}$, 
\textit{Theorem~\ref{thm3}} is based on the problem of finding $m$-disjoint stems starting at a leader in $\mathcal{V}_\mathcal{L}$ that covers the maximum number of state nodes in $\mathcal{V}_{l_k}$.
To solve this problem, let us consider two dummy nodes, $s$ and $t$.
By adding edges from $s$ to all leaders in $\mathcal{V}_\mathcal{L}$ and from all state nodes in $\mathcal{V}_{l_k}$ to $t$,
the problem of finding $m$-disjoint stems in \textit{Theorem~\ref{thm3}} can be transformed into a well-known \textit{$m$-vertex disjoint paths problem} ($m$-VDPP) \cite{robertson1995graph}.
Note that if $m$ is 1, the problem is equivalent to finding the shortest path in DAGs \cite{eppstein1998finding,takaoka1998shortest}, 
which has polynomial time complexity $\mathcal{O}(e+n)$, where $e$ is the number of edges and $n$ is the number of vertices.
For general directed graphs, the $m$-VDPP is known to be NP-complete even if $m=2$.
Fortunately, the authors in \cite{shiloach1978finding} showed that $2$-VDPP in DAGs can be solved in polynomial time with complexity $\mathcal{O}(en)$.
Furthermore, the authors in \cite{fortune1980directed} generalized the results of $2$-VDPP to 
$m$-VDPP in DAGs with complexity $\mathcal{O}(en^{m-1})$ for all $m\geq 2$.
It follows that if $m>2$, the $m$-VDPP in DAG becomes NP-hard with exponential time complexity.
Hence, for a DAG with $m$-leaders,
the time complexity of \textit{Theorem~\ref{thm3}} is $\mathcal{O}(e+n)$ if $m=1$, 
and $\mathcal{O}(en^{m-1})$ if $m\geq 2$.
Since the number of edges in a DAG is at most $n(n-1)/2$, which is the same as the number of edges in a complete undirected graph, 
we can substitute $e=n(n-1)/2$ into the time complexity expressions. 
Thus, we can also obtain the time complexity of \textit{Theorem~\ref{thm3}} as $\mathcal{O}(n^2)$ if $m=1$, and $\mathcal{O}(n^m)$ if $m\geq 2$.
On the other hand, in \textit{Algorithm~\ref{alg}}, the while loop runs for at most $n$ iterations because the maximum number of nodes in a layer is at most $n$.
Since the complexity of finding feasible maximum disjoint stems in a layer is $\mathcal{O}(n^m)$ if $m\geq 2$, 
the total time complexity of \textit{Algorithm~\ref{alg}} is $\mathcal{O}(n \cdot n^m) = \mathcal{O}(n^{m+1})$.
However, when there are multiple leaders ($m \geq 2$), the complexity increases, particularly for \textit{Algorithm~\ref{alg}}, which becomes $\mathcal{O}(n^{m+1})$. 
Nevertheless, the approach offers an intuitive perspective from a graph-theoretic standpoint, which aids in understanding the characteristics of DAGs, even when the complexity increases with multiple leaders. 
It is important to emphasize that these complexities are based on the worst-case scenario. 
In practice, the actual computational requirements may be significantly less depending on the structure and properties of a given DAG. 
Furthermore, there exist practical methods for reducing this complexity when applying the concepts to real DAGs, which will be introduced in the following section.

\begin{figure}[t] 
\centering 
\subfigure[Input : $6$-layered DAG $\mathcal{G}(\mathcal{V},\mathcal{E})$]{{\includegraphics[width=3.25in,height=1.3in,clip]{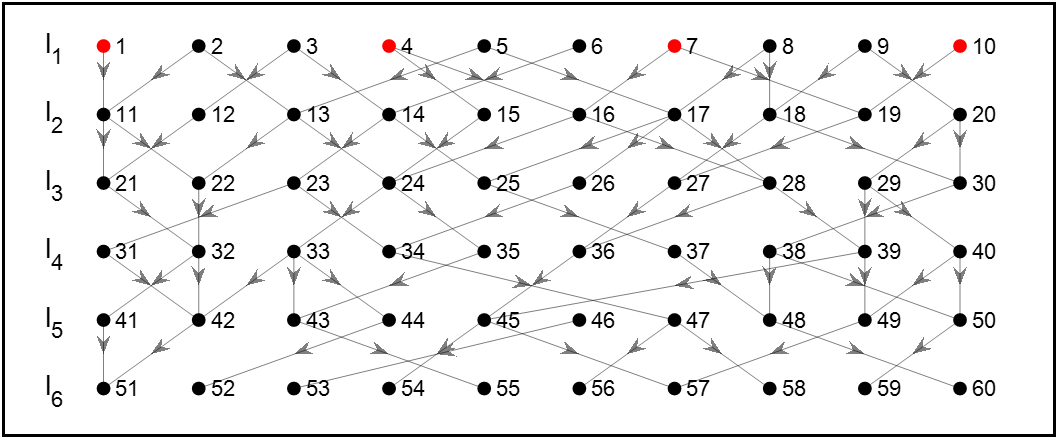}}}
    \subfigure[Output : fixed nodes in $\mathcal{G}(\mathcal{V},\mathcal{E})$]   {{\includegraphics[width=3.25in,height=1.3in,clip]{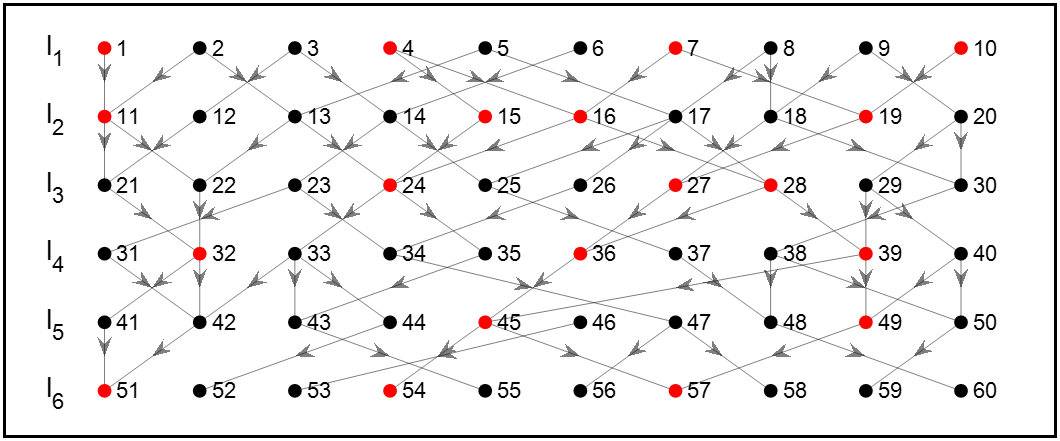}}}
    \subfigure[A set of disjoint stems in $\mathcal{G}(\mathcal{V},\mathcal{E})$]   {{\includegraphics[width=3.25in,height=1.3in,clip]{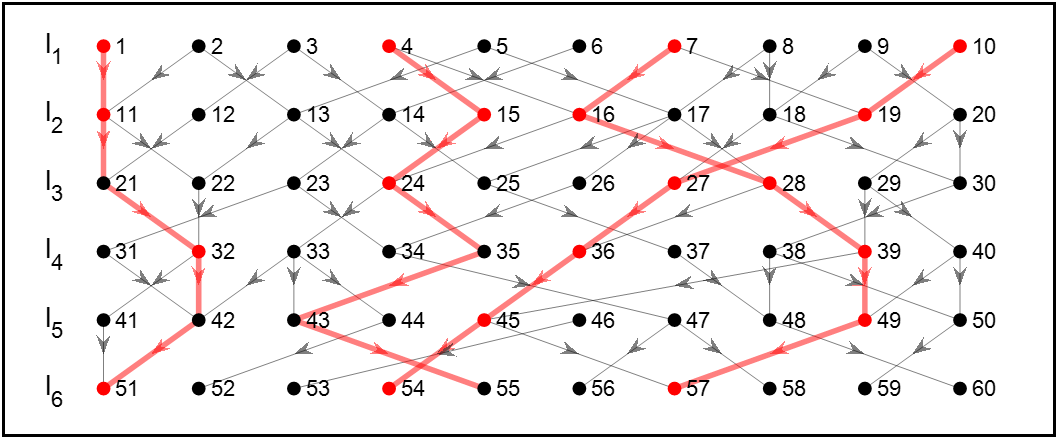}}}
\caption{Input and output graphs of \textit{Algorithm~\ref{alg}} implemented in MATLAB.
(a) $6$-layered DAG $\mathcal{G}(\mathcal{V},\mathcal{E})$ consists of $60$ state nodes ($10$ state nodes in each layer) and $80$ edges with $\mathcal{V}_{\mathcal{L}}=\{1,4,7,10\}$. The leaders (fixed nodes in the first layer) are shown in red.
(b) Output graph of \textit{Algorithm~\ref{alg}} with fixed nodes. The fixed nodes in $\mathcal{G}(\mathcal{V},\mathcal{E})$ are shown in red.
(c) One of the sets of disjoint stems that cover the maximum number of state nodes in $\mathcal{G}(\mathcal{V},\mathcal{E})$.
}  \label{fig3}
\end{figure}
\begin{figure}[t]
\centering
    \subfigure[Step 1]{{\includegraphics[width=3.2in,height=0.5in,clip]{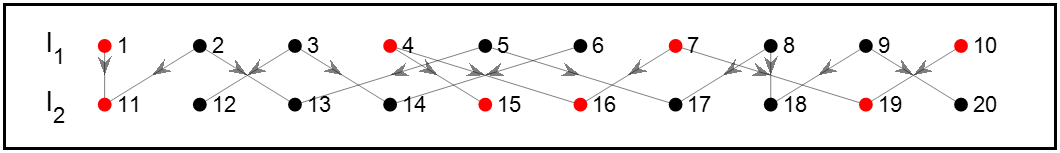}}}
    \subfigure[Step 2]{{\includegraphics[width=3.2in,height=0.7in,clip]{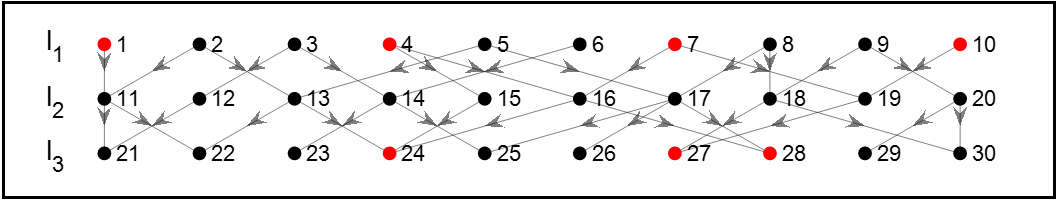}}}
    \subfigure[Step 3]{{\includegraphics[width=3.2in,height=0.9in,clip]{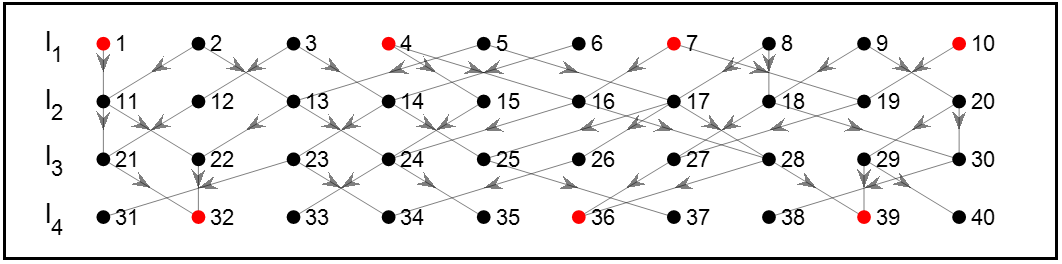}}}
    \subfigure[Step 4]{{\includegraphics[width=3.2in,height=1.1in,clip]{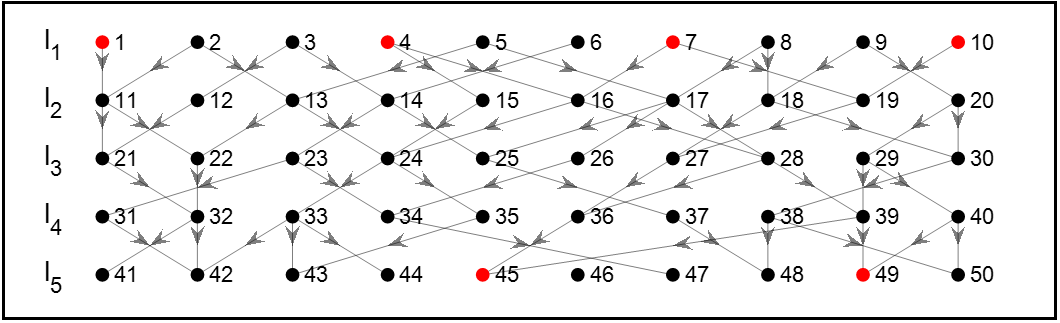}}}
    \subfigure[Step 5]{{\includegraphics[width=3.2in,height=1.4in,clip]{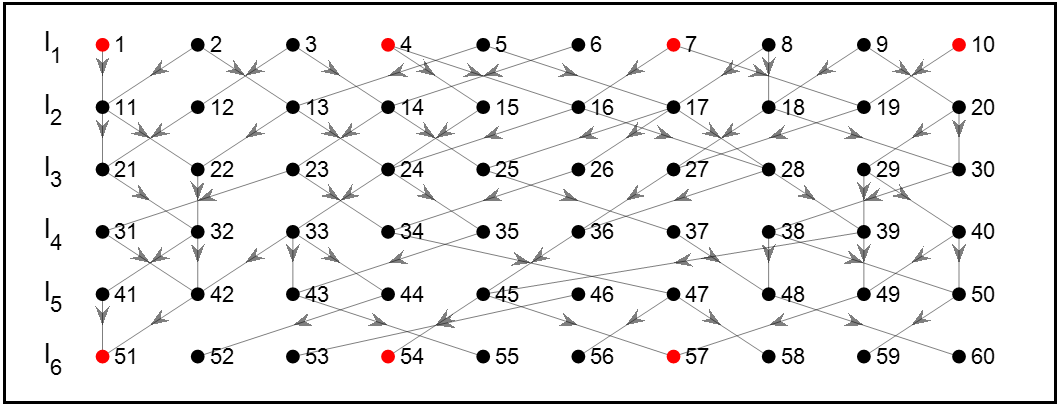}}}
    \caption{Intermediate results for each layer of \textit{Algorithm~\ref{alg}}. For intuition, the fixed nodes in each layer are shown in red.
    }
    \label{fig4}
\end{figure}

\section{Implementation of Algorithm}\label{sec_ex}
This section presents the implementation of \textit{Algorithm~\ref{alg}} in MATLAB, along with several techniques to reduce its time complexity.
For the $6$-layered DAG $\mathcal{G}(\mathcal{V},\mathcal{E})$ in Fig.~\ref{fig3}(a),
the simulation results of \textit{Algorithm~\ref{alg}} are depicted in Fig.~\ref{fig3}(b).
The detailed steps of each layer are illustrated in Fig.~\ref{fig4}.
The implementation of each step for layer $l_k, k \in \{1,...,6\}$ consists of the following steps:

\begin{itemize}
\item[(1)] Identify all possible stems to the state nodes in $\mathcal{V}_{l_k}$.
\item[(2)] Compute the sets of disjoint stems, which are formed by the combination of the stems starting from each leader.
\item[(3)] Select the sets containing the maximum number of state nodes in $\mathcal{V}_{l_k}$, i.e., the sets of matched nodes.
\item[(4)] Define the fixed nodes in $\mathcal{V}_{l_k}$ as the intersection of the sets of matched nodes in $\mathcal{V}_{l_k}$.
\end{itemize}

In \textit{Algorithm~\ref{alg}}, since finding feasible $\mathcal{M}_{j}(\mathcal{V}_{l_k})$ in $\mathcal{G}_{l_k}$ for $j \in \{1,...,n_k\}$ 
is based on the subgraph $\mathcal{G}_{l_k}$ induced by $\bigcup_{w=1}^k \mathcal{V}_{l_k}$, the computational requirement increases in proportion to the layer index.
However, during the process of (1), which calculates all stems starting from each leader to the target nodes (all state nodes in $\mathcal{V}_{l_k}$), 
excluding specific state nodes from the target nodes can significantly reduce the computational requirement. 
This is because calculating stems for fewer target nodes requires less time and resources.
To achieve this, several theories presented in this paper can be applied to reduce the target nodes in (1). 
For example, consider a set of disjoint stems that cover the maximum state nodes in $\mathcal{G}(\mathcal{V},\mathcal{E})$ as shown in Fig.~\ref{fig3}(c). 
By focusing on this set of stems, we can potentially eliminate unnecessary calculations for state nodes that are not likely to be fixed nodes.
From \textit{Lemma~\ref{lem1}}, state nodes not included in the set of disjoint stems that cover the maximum state nodes are always non-fixed nodes. 
Hence, only state nodes in the set of disjoint stems have the possibility to be fixed nodes. 
As a result, target nodes in step (1) can be reduced to only those nodes, which in turn reduces the overall computational requirement.

\begin{remark}
Note that since \textit{Theorem~\ref{thm_hosoe}} was proven to be solvable in polynomial time in \cite{poljak1990generic,commault2017fixed}, 
the problem of finding a set of disjoint stems that cover the maximum number of state nodes in DAGs also has polynomial complexity. 
This implies that even when these theories are employed to narrow down the target nodes, the overall time complexity remains manageable.
This not only enhances the efficiency by significantly reducing the algorithm's time complexity but also ensures the effectiveness of the algorithm is not compromised.
\end{remark}

\section{Conclusion}\label{sec_con}
In this paper, we have investigated the problem of identifying fixed nodes in DAGs represented as hierarchical structures with a unique label for each layer.
Our study focused on analyzing fixed nodes from both state-space and graph-theoretic perspectives and provided several properties of fixed nodes in DAGs. 
Furthermore, we introduced the concept of $m$-maximum disjoint stems to determine fixed nodes in DAGs and provided several conditions for identifying these nodes.
We have also discussed the computational complexity of our approach based on the number of leaders in DAGs. 
From a graph point of view, our study has contributed to the understanding of fixed nodes in $p$-layered DAGs by providing necessary and sufficient conditions for single and multiple leader cases, 
offering valuable insights into the controllability of individual nodes.
Despite the progress made in this paper, several open problems remain. Our proposed algorithm has a high computational cost, especially in large-scale networks, because it can be an NP-hard problem depending on the number of leaders (if $m\geq 2$). Therefore, our future work will focus on developing effective algorithms with lower time complexity and applying them to more complex graphs. 
In conclusion, this paper represents a significant step forward in understanding fixed nodes in structured networks. 


\section{Acknowledgment}
The work of this paper has been supported by the Institute of Planning and Evaluation for Technology (IPET) of Korea under the grant 1545026393.

\bibliographystyle{elsarticle-num}
\bibliography{ref_fixed} 

\end{document}